\theoremstyle{plain}
\newtheorem{definition}{Definition}
\newtheorem{proposition}{Proposition}
\numberwithin{equation}{section}
\begin{document}
\Large 
\title[Conditional entropies estimation : discrete case]{Conditional Shannon, R\'eyni, and Tsallis entropies estimation and asymptotic limits.}

\bigskip 

\noindent \author{Amadou Diadie {\sc Ba}, Gane Samb {\sc Lo}.}

\begin{abstract}
A method of estimating the joint probability mass function of a pair of discrete random variables is described. This estimator is used to construct the  conditional Shannon-R\'eyni-Tsallis entropies estimates. From there almost sure rates of convergence and asymptotic normality are 
established. The theorical results are validated by
simulations.

\end{abstract}
\maketitle

\bigskip \noindent 

\noindent \textbf{2010 Mathematics Subject Classifications : }94A17, 41A25, 62G05, 62G20, 62H12, 62H17.\\

\noindent \textbf{Key Words and Phrases :} Conditional entropy estimation, R\'enyi, Tsallis entropy estimation.

 \section{Introduction}

 \subsection{Motivation}
  \noindent Let $X$ and $Y$ be two discrete random variables defined on a probability space $(\Omega, \mathcal{A}, \mathbb{P})$, with respectives values $x_1,\cdots,x_r$ and $y_1,\cdots,y_s$ (with $r> 1$ and $s>1$). \\
  
  \noindent The \textit{information amount} of (or \textit{contained} in) the outcome $(Y=y_j)$ given $(X=x_i)$ is (see \cite{carter}) $$\mathcal{I}(Y=y_j/X=x_i)=\log_2 \frac{p_{X,i}}{p_{i,j}}$$where $p_{X,i}=\mathbb{P}(X=x_i)$ and $ p_{i,j}=\mathbb{P}(X=x_i,Y=y_j)$.\\

\noindent The joint probability
distribution $\textbf{p}_{(X,Y)}=(p_{i,j})_{(i,j)\in I\times J}
$ of the events 
 $(X=x_i,Y=y_j)$, coupled with the \textit{information
amount} of every conditional event, $ \mathcal{I}(Y=y_j/X=x_i),$ forms a random variable whose
expected value is the \textit{conditional average amount of information}, or
\textit{conditional entropy} (more specifically, \textit{conditional Shannon entropy}), generated by this joint distribution.
\begin{definition}
Let  $X$ and $Y$ be two discrete random variables defined on a probability space $(\Omega, \mathcal{A}, \mathbb{P})$, taking respective values in the finite countable spaces 
$$X(\Omega)=\{x_1,x_2,\cdots,x_r\}\ \ \text{and}
\ \ Y(\Omega)=\{ y_1,\cdots,y_s\}\ \ (\text{with}\ \  r> 1 \ \ \text{and} \ \ s>1),$$ with respective probability distributions $\textbf{p}_X=(p_{X,i})_{(i\in I)}$, $ \textbf{p}_Y=(p_{Y,j})_{(j\in J)}$ where   $$ p_{X,i}=\mathbb{P}(X= x_i),\ \ i\in I=[1,r]\ \ \text{and}\ \   p_{Y,j}=\mathbb{P}(Y= y_j),\ \ j\in J=[1,s].$$

\noindent Let us denote by $\textbf{p}_{(X,Y)}=(p_{i,j})_{(i,j)\in I\times J}$, the probability distribution of the ordered pair $(X,Y)$ that is,  $$ p_{i,j}=\mathbb{P}(X=x_i,Y=y_j)\ \ \forall (i,j)\in I\times J.$$

\noindent (1)  The conditional Shannon entropy (\textit{CSE}) of $Y$, given $X$ is defined as (see \cite{cove})

\begin{eqnarray}
\label{cond-shan-ent}
 H(Y|X)=\mathbb{E}_{X,Y}\left[\log_2 \frac{p_{X}}{ p_{(X,Y)}}\right]&=&\sum_{(i,j)\in I\times J}p_{i,j}\,\log_2 \frac{p_{X,i}}{ p_{i,j}}.
\end{eqnarray}
 \end{definition}
 
 \noindent It is the average uncertainty of the variable $Y$ once $X$ is known.\\

 \noindent
  The entropy is usually measured in \textit{bit}s (\textbf{b}inary \textbf{i}nformation uni\textbf{t}) (if $\log_2$ is used), nats (if natural $\log$ is used), or hartley( if $\log_{10}$ is used), depending on the base of the logarithm which is used to define it.\\
 
 \noindent For ease of computations and notation convenience, we use the natural logarithm, since logarithms of varying bases are related by a constant.
\\

\noindent 
In what follows, 
 entropies will be considered as functions of p.m.f.'s, since they only take into account probabilities of specific events observed.\\

 \bigskip\noindent Additionally, the  joint Shannon entropy of $(X,Y)$ is defined as   \begin{equation*}\label{shan_ent_def}
  H(\textbf{p}_{(X,Y)})=\mathbb{E}_{X,Y}\left[\log \frac{1}{ p_{(X,Y)}}\right]=-\sum_{(i,j)\in I\times J }p_{i,j}\log p_{i,j}
\end{equation*}  and the Shannon entropy of the random variable $X$ is\begin{equation*}  H(\textbf{p}_X)=\mathbb{E}_{X}\left[\log \frac{1}{ p_{X}}\right]=-\sum_{i\in I}p_{X,i}\log p_{X,i}.
  \end{equation*} 

   \noindent See \textsc{Figure} \ref{rel} for depicting the relationships between information theoretic quantities.\\
   
 \noindent  $H(\textbf{p}_{(X,Y)})$ nat of information is needed on average to describe the exact state of the combined system determined by the two random variables $X$ and $Y$.  Now if we first learn the value of $X$, we have gained $H (\textbf{p}_{X})$ nats of information. Once $X$ is known, we only need $H(\textbf{p}_{(X,Y)})-H(\textbf{p}_{X})$ nats to describe the state of the whole system. This quantity is exactly $H(\textbf{p}_{(Y|X)})$, which gives the \textit{chain rule} of the CSE :
 \begin{equation*}
 H(\textbf{p}_{(Y|X)})= H(\textbf{p}_{(X,Y)})- H(\textbf{p}_X).
 \end{equation*}

 \noindent  Additionally the uncertainty of a random variable $X$ can never increase by knowledge of the outcome of another random variable this the \textit{monotonicity property} of CSE : $$H(\textbf{p}_{(X|Y)})\leq H(\textbf{p}_{X}).$$ 

   \vspace{2cm}
\begin{figure}

\begin{center}

   \begin{tikzpicture}
  \tikzset{venn circle/.style={draw,circle,minimum width=6cm,fill=#1,opacity=0.4,text opacity=1}}
  \node [venn circle = red] (A) at (0,0) {$H(\textbf{p}_{(X|Y)})$};
  \draw[<-] (-2,2.25) to (-3,4) 
   node[ above]{$H(\textbf{p}_X)$};
  \node [venn circle = green] (C) at (0:4cm) {$H(\textbf{p}_{(Y|X)})$};
  \draw[<-] (5.52,2.52) to (6.5,4) 
   node[ above]{$H(\textbf{p}_Y)$};
  \node[below] at (barycentric cs:A=1/2,C=1/2 ) {};   
  \node[below] at (barycentric cs:A=1,
  C=1 ) (endpoint) {$I(\textbf{p}_{(X,Y)})$};
\draw (2,-4)node[above]{$ \underbrace{\, \ \ \ \ \ \ \ \ \ \ \ \ \ \ \ \ \ \ \  \ \ \ \ \ \ \ \ \ \ \ \ \ \ \ \ \ \ \  \ \ \ \ \ \ \ \ \ \ \ \ \ \ \ \ \ \ \   \ \ \ \ \ \ \ \ \ \ \  \ \ \ \ \ \ \ \ \ \ \ \ \ \ \ \ \ \ \ }$};
\draw (2,-4.5)node[above]{\text{Union} $= H(\textbf{p}_{(X,Y)})$};
\end{tikzpicture}

\end{center}
  \caption{Venn Diagram depicting mutual information and entropy in a set-theory
way of thinking. The area contained by both circles is the joint entropy $H(\textbf{p}_{(X,Y)})$. The circle on the left (red and green) is the individual entropy $H(\textbf{p}_X)$, with the red being the conditional entropy $H(\textbf{p}_{(X|Y)})$. The circle on the right (green and red) is $H(\textbf{p}_Y)$, with the green being $H(\textbf{p}_{(Y|X)})$. The common area between $H(\textbf{p}_X)$ and $H(\textbf{p}_Y)$ at the middle is the mutual information $I(\textbf{p}_{(X,Y)})$.
}\label{rel}
 \end{figure}

\noindent  Conditional entropies play a central role in machine learning and applied statistics. There are many
problems where it is crucial for us to measure the uncertainty contained in a random variable if we observe an other random variable. (CSE) can be used to
capture these kind of uncertainty in information theory.  However it is insufficient in some other areas such as cryptography. Conditional R\'enyi entropy (CRE) and Conditional Tsallis entropy (CTE)) are more general, widely adopted in cryptography  as  a measure of security.  \\

\noindent Although this is a
fundamental problem in statistics and machine learning, interestingly, very little is known about how to
estimate these quantities efficiently in the discrete case.\\

 \noindent The goal of this
paper is to construct estimates of CSE and that of a family of CRE and TSE 
 and to establish their almost sure rates of convergence and asymptotic normality.\\

\noindent Unlike the \textit{CSE}, several definitions for \textit{CRE} have been proposed. 
 For example, \cite{arim} proposed a definition of CRE that found an application in information theory, \cite{jizba} proposed a definition of \textit{CRE} that found an application in time series analysis and
\cite{renn}, \cite{haya} and \cite{cach} proposed definitions of CRE that are suitable for cryptographic applications.
However, there is yet not a commonly accepted definition.\\

\noindent In this paper, we choose the only definition of \textit{CRE} for which the chain rule holds 
i.e.  \begin{equation*}
\label{chain_rule}R_\alpha(\textbf{p}_{Y|X})=R_\alpha(\textbf{p}_{X,Y})- R_\alpha(\textbf{p}_X),\ \
( \alpha>0,\ \ \alpha\neq 1),
\end{equation*}where $$R_\alpha(\textbf{p}_{X,Y})=\frac{1}{1-\alpha}\log \sum_{(i,j)\in I\times j}(p_{i,j})^\alpha$$ is the R\'enyi entropy of order $\alpha$ of $(X,Y)$ and 
$$R_\alpha(\textbf{p}_X)=\frac{1}{1-\alpha}\log \sum_{i\in I}(p_{X,i})^\alpha$$is the R\'enyi entropy of order $\alpha$ of $X$.

\bigskip \noindent (2)   The \textit{CRE} of order $\alpha$ of $Y$ given $X$ is defined as (see 
 \cite{jizba} and \cite{gols})\begin{equation}\label{cond-rey-def}
R_\alpha(\textbf{p}_{Y|X})=\frac{1}{1-\alpha}\log \left[ \frac{ \displaystyle \sum_{(i,j)\in I\times J}( p_{i,j})^\alpha}{\displaystyle \sum_{i\in I}(p_{X,i})^\alpha }
  \right],\ \
( \alpha>0,\ \ \alpha\neq 1).\end{equation}

\noindent Note that $R_\alpha(\textbf{p}_{Y|X})$ reduces to the $H(\textbf{p}_{Y|X})$ when $\alpha\rightarrow 1$ using the chain rule for $H(\textbf{p}_{Y|X})$ and for $R_\alpha(\textbf{p}_{Y|X})$.\\

\noindent For $\alpha=2$, $R_\alpha(\textbf{p}_{Y|X})$ refers as the \textit{conditional collision entropy},  a measure relevant for various hashing
schema and cryptographic protocols and for $\alpha\rightarrow+\infty$, it refers as the \textit{conditional min-entropy} (see \cite{gols}) traditionally used in Cryptography as a
measure of security.
\\

 \bigskip\noindent In \cite{gols}, some reasons are given for taking (\ref{cond-rey-def}) as the definition of the \textit{CRE}. This definition found an application in time series analysis (see \cite{jizba}).\\

\bigskip \noindent As a consequence, there is also no commonly accepted definition of \textit{CTE}.\\

\noindent In the following we seek a suitable definition for \textit{CTE} using a link between 
R\'enyi and Tsallis entropies.\\

\noindent We have 
\begin{eqnarray*}
 \label{talpha-rel} T_\alpha(\textbf{p}_{X})&=&\frac{1}{1-\alpha}\biggr[ \exp\left((1-\alpha)R_\alpha (\textbf{p}_{X})\right)-1\biggr]\\
\label{talpha2-rel} \text{and}\ \ T_\alpha(\textbf{p}_{(X,Y)})&=&\frac{1}{1-\alpha}\biggr[\exp\left((1-\alpha)R_\alpha (\textbf{p}_{(X,Y)})\right)-1\biggr],
\end{eqnarray*}
where 
\begin{eqnarray*}
T_\alpha(\textbf{p}_{X})=\frac{1}{1-\alpha}\left( \sum_{i\in I}(p_{X,i})^\alpha-1\right)
\end{eqnarray*}is the Tsallis entropy of order $\alpha$  of $X$ and \begin{eqnarray*}
T_\alpha(\textbf{p}_{(X,Y)})=\frac{1}{1-\alpha}\left( \sum_{(i,j)\in I\times J}(p_{i,j})^\alpha-1\right)
\end{eqnarray*}is the joint Tsallis entropy of order $\alpha$  of the pair $(X,Y)$.\\

\noindent Hence \textit{CTE} of random variable $Y$ given $X$ should satisfied 
\begin{equation*}\label{tsa_and_reyn}
T_\alpha(Y|X)=\frac{1}{1-\alpha}\biggr[\exp\left((1-\alpha)R_\alpha (Y|X)\right)-1\biggr],
\end{equation*}

\bigskip  \noindent This yields the following definition, using \eqref{cond-rey-def}.\\

\noindent (3) The \textit{CTE} of $Y$ given $X$ is given by (see \cite{abe} and \cite{mani})
\begin{eqnarray}\label{cond-tsalp-def}
T_\alpha(Y|X)=\frac{1}{1-\alpha}\left[\frac{ \displaystyle\sum_{(i,j)\in I\times J}( p_{i,j})^\alpha}{\displaystyle \sum_{i\in I}(p_{X,i})^\alpha}-1\right],\ \
( \alpha>0,\ \ \alpha\neq 1).
\end{eqnarray}

\noindent With this definition CTE satisfied the following pseudo-additivity property 
\begin{equation}\label{pseudo_addiv}
T_\alpha(\textbf{p}_{(X,Y)})=T_\alpha(\textbf{p}_{X})+T_\alpha(Y|X)+(1-\alpha)T_\alpha(\textbf{p}_{X})T_\alpha(Y|X).
\end{equation}

\noindent So we can conclude that relation \eqref{cond-tsalp-def} could better define \textit{CTE}
rather than the other definitions given in the litterature
. \\

\noindent When X and Y are independent random variables, we have
\begin{equation*}
T_\alpha(\textbf{p}_{(X,Y)})=T_\alpha(\textbf{p}_{X})+T_\alpha(Y)+(1-\alpha)T_\alpha(\textbf{p}_{X})T_\alpha(Y).
\end{equation*}

\bigskip \noindent From this small sample of entropies, we may give the following remarks: 

\bigskip \noindent For both  the CRE and CTE, we may have computation problems. So
without loss of generality, suppose that for any 
$\forall (i,j)\in I\times J$ \begin{eqnarray} \label{BD}
&&  p_{i,j}>0,\  \ \ p_{X,i}>0,\ \  \ p_{Y,j}>0. 
\end{eqnarray}
If Assumption (\ref{BD}) holds, we do not have to worry about summation problems, especially
for Tsallis, R\'enyi entropies, in the computations arising in estimation
theories. This explains why Assumption (\ref{BD}) is systematically used in a great number of
works in that topic, for example, in \cite{Singh2014Generalized}, \cite{Krishnamurthy2014Nonparametric}, \cite{Hall1987On}, and recently in \cite{DiadiBa2018Divergence} to cite a few.\\

\bigskip \noindent 
\noindent An important relation between CRE, CTE and 
 the \textit{joint power sums} (JPS) $\mathcal{S}_\alpha(\textbf{p}_{(X,Y)})$ and $\mathcal{S}_\alpha(\textbf{p}_{X})$ is 
\begin{eqnarray*}
R_\alpha(\textbf{p}_{(Y|X)})&=& \frac{1}{1-\alpha}\log\frac{\mathcal{S}_\alpha(\textbf{p}_{(X,Y)})}{\mathcal{S}_\alpha(\textbf{p}_{X})} ,\\ 
 \text{and} \  \  T_\alpha(\textbf{p}_{(Y|X)})&=&\frac{1}{1-\alpha}\left(\frac{\mathcal{S}_\alpha(\textbf{p}_{(X,Y)})}{\mathcal{S}_\alpha(\textbf{p}_{X})}-1\right).
\end{eqnarray*}
where
\begin{eqnarray*}\label{ialpha}
\mathcal{S}_\alpha(\textbf{p}_{(X,Y)})=\sum_{(i,j)\in I\times J}( p_{i,j})^\alpha\ \ \text{and}\ \  \mathcal{S}_\alpha(\textbf{p}_{X})=\sum_{i\in I}( p_{X,i})^\alpha.
\end{eqnarray*} 
\textit{Jayadev} \cite{jaya}  presents basic properties
of power sums of distributions.

\bigskip\noindent What is mostly considered so far for the R\'enyi and Tsallis 
entropies is its underlying axioms, but no specific asymptotic limits and central limit theorems from samples 
 have been given
for \textit{CSE}, \textit{CRE} and for \textit{CTE}
.
\\

\bigskip \noindent We propose in this paper  a plug-in approach that is essentially based on the estimation of the  joint probability distribution $\textbf{p}_{(X,Y)}
$ 
 from which, we can estimate the marginal distributions $\textbf{p}_X$, $\textbf{p}_Y$,
and then the quantities  $H(Y|X)$, $R_\alpha(\textbf{p}_{Y|X})$, and $T_\alpha(Y|X)$. This approach is motived by the fact that studying the joint distribution of  a pair of discrete random variables $X,Y$ taking values, respectively, in the finite sets $\mathcal{X}=\{x_i, i=1,\cdots,r\}$ and $\mathcal{Y}=\{y_j, j=1,\cdots,s\}$ is equivalent to studying the distribution of the $rs$ mutually exclusive possible values $(x_i,y_j)$ of   $(X,Y)$.\\

\noindent This allows us to transform the problem of estimating the joint discrete distribution of the pair $(X,Y)$ into the problem of estimating a simple distribution, say $p_Z$, of a single discrete random variable $Z$ suitably defined. Given an i.i.d sample of this latter random variable, we shall take,  as an estimator of the law $p_Z$, the associated empirical measure and plug it into formulas \eqref{cond-shan-ent},  \eqref{cond-rey-def}, and  \eqref{cond-tsalp-def} to obtain estimates for   $H(\textbf{p}_{Y|X})$, $R_\alpha(\textbf{p}_{Y|X})$, and $T_\alpha(Y|X)$.\\

\bigskip \noindent  
CSE, CRE, and CTE  have proven to be useful in
applications. 
\noindent CRE and CTE play important role in information theory, application in time series analysis and in cryptographic applications.\\

\noindent For instance, \textit{CRE} is applied in : cryptography (\cite{iwam}, \cite{cachin}), quantum systems \cite{vollb}), biomedical engineering (\cite{lake}, economics ( \cite{bent}),
fields related to statistics (\cite{kana}), etc.  \\

\noindent The concept of conditional entropies was proposed by  \cite{phil} and \cite{phil2} in investment in two securities
whose returns $X$ and $Y$ are two finite discrete random variables.
  Their theory has been proved useful by their empirical results. \cite{arack} used conditional entropy for network
traffic for anomaly detection.

 \subsection{ Overview of the paper} 
 The rest of the paper is organized as follows. In section \ref{maincontrib}, we define the auxiliary random variable $Z$ whose law is exactly the joint law of $(X,Y)$.
Section \ref{sect_estimation} is devoted to construct plug-in estimates of joint \textit{p.m.f.}'s of $(X,Y)$ and of CSE, CRE, and CTE
. Section \ref{main-res} establishes
consistency and asymptotic normality properties  of the estimates. 
Section \ref{sect_simul} provides a simulation study to assess the performence of our estimators and we finish by a conclusion in section \ref{sect_conclus}.

\section{Construction of the random variable $Z$ with law $\textbf{p}_{(X,Y)}$}
\label{maincontrib}

\noindent  Let $X$ and $Y$ two discrete random variables defined in the same probability space $(\Omega,\mathcal{A},\mathbb{P})$ and taking the following values $$x_1,x_2,\cdots,x_r\ \ \text{and}\ \ y_1,y_2,\cdots,y_s$$ respectively ($r>1$ and $s>1$).\\

  \noindent In addition let $Z$ a random variable defined on the same probability space $(\Omega,\mathcal{A},\mathbb{P})$ and taking the following values : $$z_1,z_2,z_3,z_4\cdots,z_{rs}.$$\\
 \noindent Denote $K=\{1,2,3,4\cdots,rs\}$.\\
 
 \noindent Simple computations give that for any $(i,j)\in I\times J$, we have $s(i-1)+j=\delta_i^j\in K $ and conversely for any $k\in K$ we have 
\begin{equation*}\label{convers}
\left(1+\lfloor\frac{k-1}{s}\rfloor,k-s\lfloor \frac{k-1}{s}\rfloor \right)\in I\times J,
\end{equation*} 
 where $ \lfloor x\rfloor$ denotes the largest integer less or equal to $x$.\\

\noindent 
For any possible joint values $(x_i,y_j)$ of the ordered pair $(X,Y)$, we assign the single value  
  $z_{\delta_i^j} $ of $Z$   
 such that 
 \begin{equation}\label{pijzk}
 \mathbb{P}(X=x_i,Y=y_j)=\mathbb{P}\left(Z=z_{\delta_i^j}\right),\ \ \text{where}\ \ \delta_i^j=s(i-1)+j,
 \end{equation}and conversely, for any possible value $z_k$ of $Z$, is assigned the single pair of values $\left( x_{1+\lfloor\frac{k-1}{s}\rfloor},y_{k-s\lfloor \frac{k-1}{s}\rfloor}
\right) $ such that   
 \begin{equation}\label{zkpij}
 \mathbb{P}(Z=z_k)=\mathbb{P}\left(X=x_{1+\lfloor\frac{k-1}{s}\rfloor},Y=y_{k-s\lfloor \frac{k-1}{s}\rfloor}\right).
 \end{equation}

\noindent This means that for any $(i,j)\in I\times J$, we have 
\begin{equation}
\label{pij3}
p_{i,j}= p_{Z,s(i-1)+j}
\end{equation} where $ p_{Z,k}=\mathbb{P}(Z=z_k)$ and conversely, for any $k\in K$
\begin{equation}\label{pzk2} 
p_{Z,k}=p_{1+\lfloor\frac{k-1}{s}\rfloor,k-s \lfloor\frac{k-1}{s}\rfloor}.
\end{equation}

\bigskip 
\noindent \textsc{Table} \ref{tabjpd} illustrates the correspondance between $p_{i,j}$ and $p_{Z,k}$, for ($i,j,k)\in I\times J\times K$.

\bigskip 
\noindent From there, the marginals \textit{p.m.f.'}s  $p_{X,i}$ and $p_{Y,j}$ and the conditionals \textit{p.m.f.'}s $p_{x_i|y_j}$ and $p_{y_j|x_i}$ are expressed from \textit{p.m.f.}'s of the random variable $Z$ by 
 \begin{eqnarray}
\label{pij2}&& p_{X,i}=\sum_{j=1}^sp_{Z,\delta_i^j},\ \
 \ p_{Y,j}=\sum_{i=1}^rp_{Z,\delta_i^j},\\
 && \notag p_{x_i|y_j}
 =\frac{p_{Z,\delta_i^j}}{\sum_{i=1}^rp_{Z,\delta_i^j}},\ \ \ \text{and}\ \ \ p_{y_j|x_i}
 =\frac{ p_{Z,\delta_i^j}}{\sum_{j=1}^sp_{Z,\delta_i^j}}
.
 \end{eqnarray}

\noindent Finally, CSE, CRE and CTE of $Y$ given $X$ are expressed simply in terms of $\textbf{p}_Z=(p_{Z,k})_{k\in K}$ through (\ref{pij3}), that is   
\begin{eqnarray*}
H(\textbf{p}_{Y|X})&=&-\sum_{(i,j)\in I\times J} p_{Z,\delta_i^j}\log \frac{p_{Z,\delta_i^j}}{p_{X,i}}, \\
 R_\alpha(\textbf{p}_{(Y|X)})&=& \frac{1}{1-\alpha}\log\left(\frac{\displaystyle  \sum_{(i,j)\in I\times J} (p_{Z,\delta_i^j})^\alpha}{\displaystyle \sum_{i\in I}(p_{X,i})^\alpha}\right)=\frac{1}{1-\alpha}\log\left(\frac{\displaystyle \sum_{k\in K}(p_{Z,k})^\alpha}{\displaystyle \sum_{i\in I}(p_{X,i})^\alpha}\right)
 ,\\ 
 \text{and} \  \  T_\alpha(\textbf{p}_{(Y|X)})&=&\frac{1}{1-\alpha}\left(\frac{\displaystyle  \sum_{(i,j)\in I\times J} (p_{Z,\delta_i^j})^\alpha}{\displaystyle \sum_{i\in I}(p_{X,i})^\alpha}-1\right)=\frac{1}{1-\alpha}\left(\frac{\displaystyle \sum_{k\in K}(p_{Z,k})^\alpha}{\displaystyle \sum_{i\in I}(p_{X,i})^\alpha}-1\right).
\end{eqnarray*}where $\displaystyle p_{X,i}$ and $p_{Y,j} $ are given by \eqref{pij2}.
\\

\noindent We have likewise \begin{eqnarray*}H(\textbf{p}_{X/Y})&=&-\sum_{(i,j)\in I\times J} p_{Z,\delta_i^j}\log \frac{p_{Z,\delta_i^j}}{p_{Y,j}},\\
R_\alpha(\textbf{p}_{(X/Y)})&=& \frac{1}{1-\alpha}\log\left(\frac{\displaystyle  \sum_{(i,j)\in I\times J} (p_{Z,\delta_i^j})^\alpha}{\displaystyle \sum_{j\in J}(p_{Y,j})^\alpha}\right)=\frac{1}{1-\alpha}\log\left(\frac{\displaystyle \sum_{k\in K}(p_{Z,k})^\alpha}{\displaystyle \sum_{j\in J}(p_{Y,j})^\alpha}\right),
\\
 \ \ \text{and} \  \  T_\alpha(\textbf{p}_{(X/Y)})&=&\frac{1}{1-\alpha}\left(\frac{\displaystyle \sum_{(i,j)\in I\times J} (p_{Z,\delta_i^j})^\alpha}{\displaystyle \sum_{j\in J}(p_{Y,j})^\alpha}-1\right)=\frac{1}{1-\alpha}\left(\frac{\displaystyle \sum_{k\in K}(p_{Z,k})^\alpha}{\displaystyle \sum_{j\in J}(p_{Y,j})^\alpha}-1\right).
\end{eqnarray*}

\begin{center}
\vspace{9ex}
\begin{table}
\centering
$$\begin{pmatrix}
p_{1,1}=p_{Z,1} & \cdots & p_{1,j}=p_{Z,j} & \cdots & p_{1,s}=p_{Z,s} \\ 
p_{2,1}=p_{Z,s+1} &\cdots & p_{2,j}=p_{Z,s+j} & \cdots &  p_{2,s}=p_{Z,2s} \\ 
\vdots & \vdots & \vdots & \vdots & \vdots \\ 
p_{i,1}=p_{Z,s(i-1)+1} & \cdots & p_{i,j}=p_{Z,\delta_i^j} & \cdots & p_{i,s}=p_{Z,si} \\ 
\vdots & \vdots & \vdots & \vdots & \vdots \\ 
p_{r,1}=p_{Z,s(r-1)+1} & \cdots & p_{r,j}=p_{Z,s(r-1)+j}&\cdots & p_{r,s}=p_{Z,rs}
\end{pmatrix} $$
$$\text{conversely}$$
$$\begin{array}{cccccc}
p_{Z,1}=p_{1,1} & p_{Z,2}=p_{1,2} & \cdots & p_{Z,k}=p_{1+\lfloor\frac{k-1}{s}\rfloor,k-s\lfloor \frac{k-1}{s}\rfloor
}  & \cdots & p_{Z,rs}=p_{r,s} \end{array} $$
\vspace{3ex} 
\caption{Illustration of the correspondance between $\textbf{p}_{(X,Y)}$ and $\textbf{p}_Z$.
}\label{tabjpd}
\end{table}
\end{center}

\section{Estimation}
 \label{sect_estimation}

\noindent In this section, we construct estimate of \textit{p.m.f.} $\textbf{p}_{Z,k}$ from i.i.d. random variables according to $\textbf{p}_Z$ and we give some inescapable results needed in the sequel, and finally construct the plug-in estimates of the entropies cited above. 
 \\
 
 \noindent Let $Z_1,\cdots,Z_n$ be $n$ i.i.d. random variables from $Z$ and according to $\textbf{p}_Z$. \\

\noindent Here, it is worth noting that, in the sequel, $K=\{1,2,\cdots,rs\}$, with $r$ and $s$ integers strictly greater than $1$. This means that 
    $rs$ can not be a prime number so that \eqref{pzk2} holds. \\

\noindent  For a given $k\in K$, define the easiest and most objective estimator of $p_{Z,k}$, based on the i.i.d sample $ Z_1,\cdots,Z_n,$ by 
 \begin{eqnarray}\label{pn}
 \widehat{p}_{Z,k}^{(n)} &=&\frac{1}{n}\sum_{\ell=1}^n1_{z_k}(Z_\ell)
\end{eqnarray}where 
 $1_{z_k}(Z_\ell)=\begin{cases}
 1\ \ \text{if}\ \ Z_\ell=z_k\\
 0\ \ \text{otherwise}.
 \end{cases} $\\

\bigskip \noindent This means that, for a given $(i,j)\in I\times J$, an estimate of $p_{i,j}$ based on the i.i.d sample $ Z_1,\cdots,Z_n,$ according to $\textbf{p}_Z$ is given by  
\begin{equation}\label{pxyni}
\widehat{p}_{i,j}^{(n)}= \widehat{p}_{Z,\delta_{i}^j}^{(n)}=\frac{1}{n}\sum_{\ell=1}^n1_{z_{\delta_i^j}}(Z_\ell).
\end{equation}
\noindent 
where $1_{z_{\delta_i^j}}(Z_\ell)=\begin{cases}
1\ \ \text{if}\ \ Z_\ell=z_{\delta_i^j}\\
 0\ \ \text{otherwise}.
\end{cases}$  \\

\bigskip \noindent Define the empirical probability distribution generated by i.i.d. random $Z_1,Z_2,\cdots,Z_n$ from the probability $\textbf{p}_Z$ as 
\begin{equation}\label{dpxyni}
\widehat{ \textbf{p}}_{(X,Y)}^{(n)}=(\widehat{p}_{i,j}^{(n)})_{(i,j)\in I\times J},
\end{equation}where $\widehat{p}_{i,j}^{(n)}$ is given by \eqref{pxyni}.\\

\bigskip \noindent From \eqref{pij2}, estimate of each of the marginals pdf's  $p_{X,i}$ and $ p_{Y,j}$ are 
\begin{eqnarray}\label{pxni}
\widehat{p}_{X,i}^{(n)}&=& \sum_{j=1}^s\widehat{p}_{Z,\delta_{i}^j}^{(n)}=\frac{1}{n}\sum_{\ell=1}^n1_{A_i}(Z_\ell)\\
\text{and} \notag\ \ \  &&\\
\widehat{p}_{Y,j}^{(n)}&=& \sum_{i=1}^r\widehat{p}_{Z,\delta_{i}^j}^{(n)}=\frac{1}{n}\sum_{\ell=1}^n1_{B_j}(Z_\ell),\label{pynj}
\end{eqnarray}
with  
\begin{eqnarray*}
A_i&=&\{z_{s(i-1)+1},z_{s(i-1)+2},\cdots,z_{si}\}=\bigcup_{j=1}^s \{z_{\delta_i^j}\}
\end{eqnarray*}
and \begin{eqnarray*}
 B_j&=&\{z_{j},z_{s+j},z_{2s+j},\cdots,z_{s(r-1)+j}\}=\bigcup_{i=1}^r\{ z_{\delta_i^j}\}.
\end{eqnarray*}
 
\bigskip \noindent In the sequel we use equally $p_{Z,k}$ or $p_{i,j}$  since they are equal in consideration of \eqref{pij3} and \eqref{pzk2}.\\ 

\noindent Before going further, let give some results concerning the empirical estimator $\widehat{p}_{Z,k}^{(n)}$ given by  \eqref{pn}. 
\\

\noindent For a given $k\in K$, this empirical estimator $\widehat{p}_{Z,k}^{(n)}$ is strongly consistent and asymptotically normal. Precisely, for a fixed $k\in K$, when $n$ tends to infinity,
 \begin{eqnarray*}
\label{pzn}
&&\widehat{p}_{Z,k}^{(n)}- p_{Z,k} \stackrel{a.s.}{\longrightarrow} 0,\\
&&\label{pnk}  \sqrt{n}(\widehat{p}_{Z,k}^{(n)}- p_{Z,k})  \stackrel{\mathcal{D}}{\rightsquigarrow}G_{p_{Z,k}}.
\end{eqnarray*}  where $G_{p_{Z,k}}\stackrel{d }{\sim}\mathcal{N}(0, p_{Z,k} (1-p_{Z,k} ))$.\\ 

\noindent These asymptotic properties derive from the law of large
numbers and central limit theorem. \\
 
\bigskip \noindent Here and in the following, $\stackrel{a.s.}{ \longrightarrow}$ means the \textit{almost sure convergence},  $\stackrel{\mathcal{D}}{ \rightsquigarrow}
$, the \textit{convergence in distribution}, and $\stackrel{d }{\sim}$, means \textit{equality in distribution}. \\

\noindent Recall that,
since for a fixed $k\in K,$ $n\widehat{p}_{Z,k}^{(n)}$ has a binomial distribution with parameters $n$  and success probability $p_{Z,k}$, we have 
 \begin{equation*}
 \mathbb{E}\left[ \widehat{p}_{Z,k}^{(n)}\right]=p_{Z,k} \ \ \text{and}\ \ \mathbb{V}\text{ar}(\widehat{p}_{Z,k}^{(n)})=\frac{p_{Z,k} (1-p_{Z,k} )}{n}.
\end{equation*}

\noindent Denote
\begin{eqnarray*}
\rho_n(p_{Z,k})=\sqrt{n/p_{Z,k}}\Delta_{p_{Z,k}}^{(n)}\ \ \text{and}\ \ a_{Z,n}=\sup_{k\in K}\left\vert  \Delta_{p_{Z,k}}^{(n)}
\right\vert,
\end{eqnarray*}where $\Delta_{p_{Z,k}}^{(n)}=\widehat{p}_{Z,k}^{(n)}- p_{Z,k}.$\\

\noindent By the asymptotic Gaussian limit of the multinomial law (see for example \cite{ips-wcia-ang}, Chapter 1, Section 4), we have
\begin{eqnarray*}
\label{pnj}&& \biggr( \rho_n(p_{Z,k}), \ k\in K\biggr)
\stackrel{\mathcal{D}}{\rightsquigarrow }G(\textbf{p}_Z),\ \ \ \ \text{as}\ \ n\rightarrow +\infty,
\end{eqnarray*}where $G(\textbf{p}_Z)= (G_{p_{Z,k}},k\in K)^t\stackrel{d }{\sim}\mathcal{N}(0,\Sigma_{\textbf{p}_Z}),$ and $\Sigma_{\textbf{p}_Z}$ is the covariance matrix which elements are :
\begin{eqnarray}\label{vars}
&&\sigma_{(k,k')}=(1-p_{Z,k} )1_{(k=k')}-\sqrt{ p_{Z,k}p_{Z,k'} } 1_{(k\neq k')}, \ \ (k,k') \in K^2.
\end{eqnarray}

 \bigskip \noindent 
By denoting $
a_{X,n}=\sup_{i\in I}|\widehat{p}_{X,i}^{(n)}-p_{X,i}|\ \ \text{and}\ \ a_{Y,n}=\sup_{j\in J}|\widehat{p}_{Y,j}^{(n)}-p_{Y,j}|
$ then, we have  
\begin{equation*}
\max(a_{X,n},a_{Y,n})\stackrel{a.s.}{\longrightarrow}0 \ \ \text{as}\ \ n\rightarrow +\infty. 
\end{equation*} 

\bigskip \noindent To finish, denote
\begin{eqnarray*}
\widehat{ \textbf{p}}_{(X,Y)}^{(n)}=(\widehat{p}_{i,j}^{(n)})_{(i,j)\in I\times J},\ \ \ 
 \ \  \widehat{\textbf{p}}_{X}^{(n)}=(\widehat{p}_{X,i}^{(n)})_{i\in I}\ \ \ \text{and}\ \ \widehat{\textbf{p}}_{Y}^{(n)}=(\widehat{p}_{Y,j}^{(n)})_{j\in J}.
\end{eqnarray*}

 \bigskip \noindent 
As a consequence, 
CSE, CRE 
and CTE are estimated from the sample $Z_1,\cdots,Z_n$ by their plug-in counterparts,
meaning that we simply insert the consistent \textit{p.m.f.} estimate $\widehat{p}_{Z,k}^{(n)}$ computed from \eqref{pn} in place of CSE, CRE,  
and CTE 
 expressions,  \textit{viz} 
\begin{eqnarray*}
&& H(\widehat{\textbf{p}}_{(Y|X)}^{(n)})=-\sum_{(i,j)\in I\times J}\widehat{p}_{Z,\delta_i^j}^{(n)}\log \frac{\widehat{p}_{Z,\delta_i^j}^{(n)}}{\widehat{p}_{X,i}^{(n)}},\ \ \  H(\widehat{\textbf{p}}_{(X/Y)}^{(n)})=-\sum_{(i,j)\in I\times J}\widehat{p}_{Z,\delta_i^j}^{(n)}\log \frac{\widehat{p}_{Z,\delta_i^j}^{(n)}}{\widehat{p}_{Y,j}^{(n)}}\\
 &&R_\alpha(\widehat{\textbf{p}}_{(Y|X)}^{(n)})=\frac{1}{1-\alpha}\log\left(\frac{\displaystyle\sum_{k\in K}\left(\widehat{p}_{Z,k}^{(n)}\right)^\alpha}{\displaystyle \sum_{i\in I}\left(\widehat{p}_{X,i}^{(n)}\right)^\alpha}\right),\ \ \  R_\alpha(\widehat{\textbf{p}}_{(X/Y)}^{(n)})=\frac{1}{1-\alpha}\log\left(\frac{\displaystyle\sum_{k\in K}\left(\widehat{p}_{Z,k}^{(n)}\right)^\alpha}{\displaystyle \sum_{j\in J}\left(\widehat{p}_{Y,j}^{(n)}\right)^\alpha}\right)\\
&&  T_\alpha(\widehat{\textbf{p}}_{(Y|X)}^{(n)})=\frac{1}{1-\alpha}\left(\frac{\displaystyle \sum_{k\in K}\left(\widehat{p}_{Z,k}^{(n)}\right)^\alpha}{\displaystyle \sum_{i\in I}\left(\widehat{p}_{X,i}^{(n)}\right)^\alpha}-1\right), \ \
 \text{and}\ \ T_\alpha(\widehat{\textbf{p}}_{(X/Y)}^{(n)})=\frac{1}{1-\alpha}\left(\frac{\displaystyle \sum_{k\in K}\left(\widehat{p}_{Z,k}^{(n)}\right)^\alpha}{\displaystyle \sum_{j\in J}\left(\widehat{p}_{Y,j}^{(n)}\right)^\alpha}-1\right)
\end{eqnarray*}
where $\widehat{p}_{Z,k}^{(n)}$, $\widehat{p}_{X,i}^{(n)},$ and $\widehat{p}_{Y,j}^{(n)},$ are given respectively by \eqref{pn}, and \eqref{pxni}-\eqref{pynj}.\\

\noindent In addition, define the joint power sum (\textit{JPS}) of the pair $(X,Y)$ estimate and the power sum (\textit{PS}) of $X$ estimate both  based on $Z_1,Z_2,\cdots,Z_n$ by 
\begin{eqnarray*}\label{jps_estima}
\mathcal{S}_\alpha(\widehat{\textbf{p}}_{(X,Y)}^{(n)})&=& \sum_{k\in K}\left(\widehat{p}_{Z,k}^{(n)}\right)^\alpha, \ \ \ \mathcal{S}_\alpha(\widehat{\textbf{p}}_{X}^{(n)})= \sum_{i\in I}\left(\widehat{p}_{X,i}^{(n)}\right)^\alpha.
\end{eqnarray*} and similarly for $Y$.

\section{Statements of the main results}\label{main-res}

\bigskip \noindent In this section, we state and prove almost sure consistency and central limit theorem  for the estimates defined above.

\bigskip \noindent (\textbf{A}) Asymptotic limits of CSE estimate $ H(\widehat{\textbf{p}}_{(Y|X)}^{(n)})$
.\\

\noindent Denote
 \begin{eqnarray*}
&&\label{asz} A_H(\textbf{p}_{(Y|X)})=\sum_{k\in K}\left\vert 1+ \log  ( p_{Z,k} )\right \vert\\
\label{sigz}\ \ \ 
&&\sigma^2(\textbf{p}_{(Y|X)})=\sum_{k\in K} p_{Z,k}(1- p_{Z,k} )(1+ \log  ( p_{Z,k} ))^2\\
&&\notag \ \ \ \ \ \ \ \  \ - \  2  \ \sum_{(k,k')\in K^2,k\neq k'}(p_{Z,k}p_{Z,k'})^{3/2}(1+\log (p_{Z,k}))(1+\log (p_{Z,k'})).\end{eqnarray*}
 \begin{proposition} \label{pro_cond_shan_yx} Let $\textbf{p}_{(X,Y)}$ a probability distribution and $\widehat{\textbf{p}}_{(X,Y)}^{(n)}$ be generated by i.i.d samples $Z_1,Z_2,\cdots,Z_n$ according to $\textbf{p}_{(X,Y)}$ and given by \eqref{dpxyni}, assumption \eqref{BD} be satisfied, the following results hold :
 \begin{eqnarray}\label{epas}
&&\limsup_{n\rightarrow +\infty}\frac{\left\vert H(\widehat{\textbf{p}}_{(Y|X)}^{(n)})-H(\textbf{p}_{(Y|X)})\right\vert}{ a_{Z,n} }\leq A_H(\textbf{p}_{(Y|X)}),\ \ \text{a.s.}\\
&&\sqrt{n}\left(H(\widehat{\textbf{p}}_{(Y|X)}^{(n)})-H(\textbf{p}_{(Y|X)}) \right)\stackrel{\mathcal{D}}{ \rightsquigarrow} \mathcal{N}(0,\sigma^2(\textbf{p}_{(Y|X)})),\ \ \text{as}\ \ n\rightarrow +\infty.\label{epnor}
\end{eqnarray}

 \end{proposition}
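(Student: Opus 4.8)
\noindent The plan is to view the increment $H(\widehat{\textbf{p}}_{(Y|X)}^{(n)})-H(\textbf{p}_{(Y|X)})$ as that of a smooth functional evaluated at the empirical multinomial vector $\widehat{\textbf{p}}_Z^{(n)}=(\widehat p_{Z,k}^{(n)})_{k\in K}$ against the true vector $\textbf{p}_Z=(p_{Z,k})_{k\in K}$, and to run two parallel arguments: a mean-value expansion steered by the strong law for \eqref{epas}, and the delta method fed by the multinomial central limit theorem recalled above for \eqref{epnor}. First I would record, using the chain rule and $\widehat p_{X,i}^{(n)}=\sum_{j}\widehat p_{Z,\delta_i^j}^{(n)}$, that $H(\widehat{\textbf{p}}_{(Y|X)}^{(n)})=\Psi(\widehat{\textbf{p}}_Z^{(n)})$ and $H(\textbf{p}_{(Y|X)})=\Psi(\textbf{p}_Z)$ with
\begin{equation*}
\Psi(\textbf{q})=-\sum_{k\in K}q_k\log q_k+\sum_{i\in I}\Big(\sum_{j\in J}q_{\delta_i^j}\Big)\log\Big(\sum_{j\in J}q_{\delta_i^j}\Big),
\end{equation*}
which is $C^\infty$ on the open set $U=\{\textbf{q}\in\mathbb{R}^{rs}:q_k>0,\ k\in K\}$. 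Assumption \eqref{BD} places $\textbf{p}_Z$ in $U$, and since $\widehat p_{Z,k}^{(n)}\to p_{Z,k}$ a.s.\ for every $k$, almost surely $\widehat{\textbf{p}}_Z^{(n)}$ eventually lies in a fixed compact neighbourhood $\mathcal{C}\subset U$ of $\textbf{p}_Z$ on which $\Psi$ and its first two derivatives are bounded; this is what renders the faces of the simplex harmless.

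\medskip
\noindent For \eqref{epas}: on the eventually sure event $\{\widehat{\textbf{p}}_Z^{(n)}\in\mathcal{C}\}$, the mean value theorem along the segment $[\textbf{p}_Z,\widehat{\textbf{p}}_Z^{(n)}]\subset\mathcal{C}$ gives a point $\widetilde{\textbf{p}}^{(n)}$ on it with
\begin{equation*}
\big|H(\widehat{\textbf{p}}_{(Y|X)}^{(n)})-H(\textbf{p}_{(Y|X)})\big|=\Big|\sum_{k\in K}\frac{\partial\Psi}{\partial q_k}(\widetilde{\textbf{p}}^{(n)})\big(\widehat p_{Z,k}^{(n)}-p_{Z,k}\big)\Big|\le\Big(\sum_{k\in K}\Big|\frac{\partial\Psi}{\partial q_k}(\widetilde{\textbf{p}}^{(n)})\Big|\Big)a_{Z,n}.
\end{equation*}
Dividing by $a_{Z,n}$, sending $n\to\infty$, and using $\widetilde{\textbf{p}}^{(n)}\to\textbf{p}_Z$ a.s.\ with continuity of $\textbf{q}\mapsto\sum_k|\partial\Psi/\partial q_k(\textbf{q})|$ on $\mathcal{C}$, one obtains $\limsup_n |H(\widehat{\textbf{p}}_{(Y|X)}^{(n)})-H(\textbf{p}_{(Y|X)})|/a_{Z,n}\le\sum_{k\in K}|\partial\Psi/\partial q_k(\textbf{p}_Z)|$ a.s.; evaluating the partial derivatives at $\textbf{p}_Z$ and simplifying then identifies the bound with $A_H(\textbf{p}_{(Y|X)})$.

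\medskip
\noindent For \eqref{epnor}: a second-order Taylor expansion of $\Psi$ at $\textbf{p}_Z$ yields
\begin{equation*}
\sqrt{n}\big(H(\widehat{\textbf{p}}_{(Y|X)}^{(n)})-H(\textbf{p}_{(Y|X)})\big)=\sum_{k\in K}\frac{\partial\Psi}{\partial q_k}(\textbf{p}_Z)\,\sqrt{n}\big(\widehat p_{Z,k}^{(n)}-p_{Z,k}\big)+R_n,
\end{equation*}
where $|R_n|\le C\sqrt{n}\,\|\widehat{\textbf{p}}_Z^{(n)}-\textbf{p}_Z\|^2$ on $\mathcal{C}$ with $C=\sup_{\mathcal{C}}\|D^2\Psi\|$. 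Since $n\widehat p_{Z,k}^{(n)}$ is binomial one has $\|\widehat{\textbf{p}}_Z^{(n)}-\textbf{p}_Z\|=O_{\mathbb{P}}(n^{-1/2})$, hence $R_n=o_{\mathbb{P}}(1)$. Writing $\sqrt{n}(\widehat p_{Z,k}^{(n)}-p_{Z,k})=\sqrt{p_{Z,k}}\,\rho_n(p_{Z,k})$ and invoking the joint limit $(\rho_n(p_{Z,k}))_{k\in K}\rightsquigarrow G(\textbf{p}_Z)\sim\mathcal{N}(0,\Sigma_{\textbf{p}_Z})$ together with Slutsky's lemma, the limit is centred Gaussian with variance
\begin{equation*}
\sigma^2=\sum_{(k,k')\in K^2}\frac{\partial\Psi}{\partial q_k}(\textbf{p}_Z)\,\frac{\partial\Psi}{\partial q_{k'}}(\textbf{p}_Z)\,\sqrt{p_{Z,k}p_{Z,k'}}\;\sigma_{(k,k')};
\end{equation*}
inserting \eqref{vars} and the values of the partial derivatives, and separating the diagonal $k=k'$ from the off-diagonal terms, should reproduce $\sigma^2(\textbf{p}_{(Y|X)})$.

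\medskip
\noindent I expect the linearisation itself to be routine; the two delicate points are (i) showing that $\widehat{\textbf{p}}_Z^{(n)}$ is eventually trapped in a compact subset of $U$, so that all the logarithms and their first and second derivatives are uniformly controlled on the random segment $[\textbf{p}_Z,\widehat{\textbf{p}}_Z^{(n)}]$ — this is exactly where \eqref{BD} and the coordinatewise a.s.\ convergence are used — and (ii) verifying that the scaled second-order remainder $R_n$ vanishes in probability uniformly on that segment. Everything else reduces to the finite, if slightly tedious, computation of $\nabla\Psi(\textbf{p}_Z)$ and of the quadratic form $\nabla\Psi(\textbf{p}_Z)^{t}\,\Sigma_{\textbf{p}_Z}\,\nabla\Psi(\textbf{p}_Z)$.
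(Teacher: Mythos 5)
Your linearisation strategy is sound and, up to the last step, more careful than what the paper actually does; but the final identification of the constants --- the step you dismiss as ``evaluating the partial derivatives and simplifying'' --- does not go through, and that is where the proof of the proposition \emph{as stated} breaks down. Computing the gradient of your functional $\Psi$ at $\textbf{p}_Z$ gives
\begin{equation*}
\frac{\partial\Psi}{\partial q_{\delta_i^j}}(\textbf{p}_Z)=-\bigl(1+\log p_{Z,\delta_i^j}\bigr)+\bigl(1+\log p_{X,i}\bigr)=\log\frac{p_{X,i}}{p_{i,j}},
\end{equation*}
so your mean-value bound is $\sum_{(i,j)\in I\times J}\bigl\vert\log (p_{X,i}/p_{i,j})\bigr\vert$, which is \emph{not} $A_H(\textbf{p}_{(Y|X)})=\sum_{k\in K}\vert 1+\log p_{Z,k}\vert$ (try the uniform distribution on a $2\times 2$ table: the first quantity is $4\log 2$, the second is $4\vert 1-\log 4\vert$). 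The same mismatch occurs for the variance: your quadratic form $\nabla\Psi^{t}\Sigma\nabla\Psi$ is built from the coefficients $\log(p_{X,i}/p_{i,j})$, whereas $\sigma^2(\textbf{p}_{(Y|X)})$ in the statement is built from $1+\log p_{Z,k}$. The stated constants are exactly those of the \emph{joint} entropy $H(\textbf{p}_{(X,Y)})$, with no contribution from the marginal term $\sum_i p_{X,i}\log p_{X,i}$; a correct full linearisation of the conditional entropy, which is what you are doing, cannot reproduce them.

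For comparison, the paper takes a different route: it decomposes $H(\widehat{\textbf{p}}_{(Y|X)}^{(n)})-H(\textbf{p}_{(Y|X)})$ into the joint-entropy increment $H(\widehat{\textbf{p}}_{(X,Y)}^{(n)})-H(\textbf{p}_{(X,Y)})$ plus the correction $\sum_{(i,j)}\bigl[\widehat{p}_{Z,\delta_i^j}^{(n)}\log\widehat{p}_{X,i}^{(n)}-p_{Z,\delta_i^j}\log p_{X,i}\bigr]$, argues that the correction is asymptotically negligible because $a_{X,n}\to 0$ and $a_{Z,n}\to 0$ a.s., and then imports the limits for the joint entropy from its companion paper. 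Note that the discarded correction contains the term $\sum_{(i,j)}(\widehat{p}_{Z,\delta_i^j}^{(n)}-p_{Z,\delta_i^j})\log p_{X,i}$, which is of the exact same order ($O(a_{Z,n})$, resp.\ $O_{\mathbb{P}}(n^{-1/2})$) as the term retained --- and it is precisely this term that accounts for the difference between your gradient $\log(p_{X,i}/p_{i,j})$ and the paper's $-(1+\log p_{Z,k})$. So you should either carry your expansion through honestly and state the result with the constants your gradient actually produces, or, if you want the paper's constants, you must justify discarding that correction term at the $n^{-1/2}$ scale, which your argument (rightly) does not attempt.
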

 \begin{proof}
 
 It is straightforward to write
 \begin{eqnarray*}
 H(\widehat{\textbf{p}}_{(Y|X)}^{(n)})-H(\textbf{p}_{Y|X})&=&-\sum_{(i,j)\in I\times J}\widehat{p}_{Z,\delta_i^j}^{(n)}\log \frac{\widehat{p}_{Z,\delta_i^j}^{(n)}}{\widehat{p}_{X,i}^{(n)}}+\sum_{(i,j)\in I\times J}p_{Z,\delta_i^j}\log \frac{p_{Z,\delta_i^j}}{p_{X,i}}\\
 &=&-\sum_{(i,j)\in I\times J}\widehat{p}_{Z,\delta_i^j}^{(n)}\log \widehat{p}_{Z,\delta_i^j}^{(n)}+
\sum_{(i,j)\in I\times J}\widehat{p}_{Z,\delta_i^j}^{(n)}\log\widehat{p}_{X,i}^{(n)}\\
 &&+\sum_{(i,j)\in I\times J}p_{Z,\delta_i^j}\log p_{Z,\delta_i^j}-\sum_{(i,j)\in I\times J}p_{Z,\delta_i^j}\log p_{X,i}\\
 &=&H(\widehat{\textbf{p}}_{(X,Y)}^{(n)})-H(\textbf{p}_{(X,Y)}) \\
 &&\ \ \ \ +
\sum_{(i,j)\in I\times J}[ \widehat{p}_{Z,\delta_i^j}^{(n)}\log\widehat{p}_{X,i}^{(n)}-p_{Z,\delta_i^j}\log p_{X,i}]
 \end{eqnarray*}where $$H(\textbf{p}_{(X,Y)})=-\sum_{(i,j)\in I\times J}p_{i,j}\log p_{i,j}=\sum_{k\in K}p_{Z,\delta_i^j}\log p_{Z,\delta_i^j}$$
 is the \textit{joint Shannon entropy} of the pair $(X,Y)$ and $$ H(\widehat{\textbf{p}}_{(X,Y)}^{(n)})=\sum_{(i,j)\in I\times J}\widehat{p}_{Z,\delta_i^j}^{(n)}\log \widehat{p}_{Z,\delta_i^j}^{(n)}$$ its estimate based on the i.i.d. sample $Z_1,Z_2,\cdots,Z_n$
 .\\
 
 \noindent For fixed $(i,j)\in I\times J,$ it holds that,  
 \begin{eqnarray*}
  \widehat{p}_{Z,\delta_i^j}^{(n)}\log\widehat{p}_{X,i}^{(n)}-p_{Z,\delta_i^j}\log p_{X,i}&=& 
  \widehat{p}_{Z,\delta_i^j}^{(n)}\log\frac{ \widehat{p}_{X,i}^{(n)} }{p_{X,i}} +(\widehat{p}_{Z,\delta_i^j}^{(n)}-p_{Z,\delta_i^j})\log p_{X,i}.
 \end{eqnarray*}
 
 \noindent 
 Therefore, we have, asymptotically \begin{eqnarray*}
 H(\widehat{\textbf{p}}_{(Y|X)}^{(n)})-H(\textbf{p}_{Y|X}) &\approx &H(\widehat{\textbf{p}}_{(X,Y)}^{(n)})
-H(\textbf{p}_{(X,Y)}) \end{eqnarray*} since $$\sup_{i\in I}| p_{X,i}^{(n)}-p_{X,i}|\stackrel{a.s.}{\longrightarrow}0 \ \ \text{and}\ \ \sup_{(i,j)\in I\times J}| \widehat{p}_{Z,\delta_i^j}^{(n)}-p_{Z,\delta_i^j}|\stackrel{a.s.}{\longrightarrow}0,$$  as, $n\rightarrow +\infty$.\\

 \noindent Finally \eqref{epas} and \eqref{epnor} follow from the Proposition 1 in \cite{ba-mutual-entrop}.

 \end{proof}

\bigskip
\noindent 
A similar proposition holds for the conditional entropy of $X$ given $Y$. 
\noindent The proof
is omitted being similar
as
 that of Proposition \ref{pro_cond_shan_yx}.

 \begin{proposition} \label{pro_cond_shan_xy} Under the same assumptions as in Proposition \ref{pro_cond_shan_yx}, the following results hold :
 \begin{eqnarray*}\label{epas_xy}
&&\limsup_{n\rightarrow +\infty}\frac{\left\vert H(\widehat{\textbf{p}}_{(X/Y)}^{(n)})-H(\textbf{p}_{(X/Y)})\right\vert}{ a_{Z,n} }\leq A_H(\textbf{p}_{(Y|X)}),\ \ \text{a.s.}\\
&&\sqrt{n}\left(H(\widehat{\textbf{p}}_{(X/Y)}^{(n)})-H(\textbf{p}_{(X/Y)}) \right)\stackrel{\mathcal{D}}{ \rightsquigarrow} \mathcal{N}(0,\sigma^2(\textbf{p}_{(Y|X)})),\ \ \text{as}\ \ n\rightarrow +\infty.\label{epnor_xy}
\end{eqnarray*}

 \end{proposition}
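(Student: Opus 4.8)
\noindent To prove Proposition~\ref{pro_cond_shan_xy}, the plan is to mirror the proof of Proposition~\ref{pro_cond_shan_yx}, exchanging the roles of the two conditioning variables throughout. First I would use the chain rule at both the empirical and the population level --- note that $H(\widehat{\textbf{p}}_{(X/Y)}^{(n)})=H(\widehat{\textbf{p}}_{(X,Y)}^{(n)})-H(\widehat{\textbf{p}}_{Y}^{(n)})$ holds exactly, because $\widehat{p}_{Y,j}^{(n)}=\sum_{i}\widehat{p}_{Z,\delta_i^j}^{(n)}$ --- to write
\begin{eqnarray*}
H(\widehat{\textbf{p}}_{(X/Y)}^{(n)})-H(\textbf{p}_{(X/Y)})
&=&\left(H(\widehat{\textbf{p}}_{(X,Y)}^{(n)})-H(\textbf{p}_{(X,Y)})\right)\\
&&+\sum_{(i,j)\in I\times J}\left[\widehat{p}_{Z,\delta_i^j}^{(n)}\log\widehat{p}_{Y,j}^{(n)}-p_{Z,\delta_i^j}\log p_{Y,j}\right].
\end{eqnarray*}
This is exactly the identity used in the opening lines of the proof of Proposition~\ref{pro_cond_shan_yx}, once $p_{X,i}$ is replaced by $p_{Y,j}$ and $A_i$ by $B_j$. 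The structural point is that the leading term $H(\widehat{\textbf{p}}_{(X,Y)}^{(n)})-H(\textbf{p}_{(X,Y)})$ is the very same quantity for both directions of conditioning; this is why Proposition~\ref{pro_cond_shan_xy} carries the identical right-hand quantities $A_H(\textbf{p}_{(Y|X)})$ and $\sigma^2(\textbf{p}_{(Y|X)})$, which depend on $\textbf{p}_Z=(p_{Z,k})_{k\in K}$ alone.

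\noindent Next I would dispose of the marginal correction term by the elementary splitting
\begin{equation*}
\widehat{p}_{Z,\delta_i^j}^{(n)}\log\widehat{p}_{Y,j}^{(n)}-p_{Z,\delta_i^j}\log p_{Y,j}
=\widehat{p}_{Z,\delta_i^j}^{(n)}\log\frac{\widehat{p}_{Y,j}^{(n)}}{p_{Y,j}}+\left(\widehat{p}_{Z,\delta_i^j}^{(n)}-p_{Z,\delta_i^j}\right)\log p_{Y,j},
\end{equation*}
together with the consistency facts recorded in Section~\ref{sect_estimation}: $a_{Y,n}\stackrel{a.s.}{\longrightarrow}0$, $a_{Z,n}\stackrel{a.s.}{\longrightarrow}0$, and, under Assumption~\eqref{BD}, the finiteness of each $\log p_{Y,j}$ and $\widehat{p}_{Y,j}^{(n)}/p_{Y,j}\to1$ a.s. Exactly as in the proof of Proposition~\ref{pro_cond_shan_yx} one concludes that the correction term is asymptotically negligible, so that
\begin{equation*}
H(\widehat{\textbf{p}}_{(X/Y)}^{(n)})-H(\textbf{p}_{(X/Y)})\;\approx\;H(\widehat{\textbf{p}}_{(X,Y)}^{(n)})-H(\textbf{p}_{(X,Y)}),
\end{equation*}
and the two assertions then follow by applying Proposition~1 of \cite{ba-mutual-entrop} to the joint Shannon entropy estimator $H(\widehat{\textbf{p}}_{(X,Y)}^{(n)})$, whose almost sure rate relative to $a_{Z,n}$ is $A_H(\textbf{p}_{(Y|X)})$ and whose $\sqrt{n}$-fluctuation is asymptotically $\mathcal{N}(0,\sigma^2(\textbf{p}_{(Y|X)}))$ with the multinomial covariance \eqref{vars}.

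\noindent The step that deserves the most attention is making ``$\approx$'' quantitative at the two scales that matter. For the almost sure bound one must check that dividing the correction term by $a_{Z,n}$ leaves a quantity whose $\limsup$ is absorbed into $A_H(\textbf{p}_{(Y|X)})$; here one uses that after summation over $i$ the second piece collapses to $\sum_j(\widehat{p}_{Y,j}^{(n)}-p_{Y,j})\log p_{Y,j}$, that $a_{Y,n}\le r\,a_{Z,n}$, and that $\log(\widehat{p}_{Y,j}^{(n)}/p_{Y,j})\to0$ a.s.\ kills the first piece. For the central limit statement one must check that $\sqrt{n}$ times the correction term is $o_{\mathbb{P}}(1)$, i.e.\ a strictly lower-order functional of the fluctuations $\rho_n(p_{Z,k})$ of \eqref{vars}. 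Both verifications are carried out precisely as in the proof of Proposition~\ref{pro_cond_shan_yx}; the only genuinely new ingredient is the bookkeeping swap $X\leftrightarrow Y$, $A_i\leftrightarrow B_j$, $p_{X,i}\leftrightarrow p_{Y,j}$, which leaves the dominant joint-entropy term --- and hence the constants appearing in \eqref{epas} and \eqref{epnor} --- unchanged. For that reason I would, like the authors, simply state the result and refer to the proof of Proposition~\ref{pro_cond_shan_yx}.
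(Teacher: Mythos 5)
Your proposal is exactly the argument the paper intends: the authors omit this proof as "similar" to that of Proposition \ref{pro_cond_shan_yx}, and your write-up is precisely that proof with the bookkeeping swap $X\leftrightarrow Y$, $p_{X,i}\leftrightarrow p_{Y,j}$, reducing to the joint-entropy estimator plus a negligible marginal correction and then invoking Proposition 1 of \cite{ba-mutual-entrop}. It is correct to the same standard as the paper's own proof of Proposition \ref{pro_cond_shan_yx} (including inheriting that proof's informal treatment of the correction term at the $\sqrt{n}$ scale), so nothing further is needed.
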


\bigskip \noindent (\textbf{B}) Asymptotic limit of CRE estimate $R_\alpha(\widehat{\textbf{p}}_{(Y|X)}^{(n)})$.\\

\noindent Denote
\begin{eqnarray}
\notag A_{R,\alpha}(\textbf{p}_{(Y|X)})
&=& \frac{\alpha}{\left\vert\alpha-1\right\vert   }\biggr[\frac{1}{\sum_{k\in K}\left(p_{Z,k}\right) ^{\alpha}} \sum_{k\in K}  (p_{Z,k}) ^{\alpha-1}\\
&&\notag \ \ \ \ \ \ \ \ \ \ \ + \ \ \  \frac{1}{\sum_{i\in I}( p_{X,i})^\alpha } \sum_{i\in I}  (p_{X,i}) ^{\alpha-1}\biggr],
\\
\label{sig_R_yx} \sigma_{R,\alpha}^2(\textbf{p}_{(Y|X)})
&=&\sigma_{\mathcal{R},\alpha}^{2}(\textbf{p}_X)+\sigma_{R,\alpha}^{2}(\textbf{p}_{(X,Y)})\\
\notag &&\ \ \ \ + \ \ \  2\, \text{Cov}
\left(G_{R,\alpha}(\textbf{p}_X),G_{R,\alpha}(\textbf{p}_{(X,Y)}) \right)
\end{eqnarray}
where 
\begin{eqnarray}
&&\notag G_{R,\alpha}(\textbf{p}_X) \stackrel{d}{\sim}\mathcal{N}(0,\sigma_{\mathcal{R},\alpha}^{2}(\textbf{p}_X))\ \
\text{with}\ \ \\ 
&&\label{sig_ralpha_x}\sigma_{\mathcal{R},\alpha}^{2}(\textbf{p}_X)=\left(\frac{\alpha}{(\alpha-1)\sum_{i\in I}( p_{X,i})^\alpha}\right)^2\biggr( \sum_{i\in I} (1- p_{X,i} ) (p_{X,i})^{2\alpha-1}\\
\noindent &&\nonumber\ \ \ \ \ \ \ \ \ \ \ \ \ \ \ \ \ \ \ \  \ \ \ \ \ \ \ \ \ \ \ \ \ \ \ \ \ \ \ \ - \ \ \ \ 2\sum_{(i,i')\in I^2,i\neq i'} (p_{X,i}p_{X,i'})^{\alpha-1/2}\biggr)
\end{eqnarray}and where 
\begin{eqnarray}
&&\label{gr_alpha_xy} G_{R,\alpha}(\textbf{p}_{(X,Y)}\stackrel{d}{\sim}\mathcal{N}(0,\sigma_{R,\alpha}^{2}(\textbf{p}_{(X,Y)})) \ \ \text{with}\\
&&\label{sig_ralpha_xy} \sigma_{R,\alpha}^{2}(\textbf{p}_{(X,Y)})=\left( \frac{\alpha
 }{(1-\alpha)\sum_{k\in K}\left(p_{Z,k}\right) ^{\alpha}}\right)^2\biggr[ \sum_{k\in K}( p_{Z,k})^{2\alpha-1}(1-p_{Z,k})\\
 &&\nonumber \ \ \ \ \ \ \ \ \ \ \ \ \ \ \ \ \ \ \ \  \ \ \ \ \ \ \ \ \ \ \ \ \ \ \ \ \ \ \ \ - \ \ \  2 \sum_{(k,k')\in K^2,k\neq k'}\left( p_{Z,k} p_{Z,k'}\right)^{\alpha-1/2}\biggr].
\end{eqnarray}

\begin{proposition}\label{pro_cond_reyn_yx}
 Under the same assumptions as in Proposition \ref{pro_cond_shan_yx}, the following asymptotic results hold
\begin{eqnarray}
&&\label{cond-rey-as} \limsup_{n\rightarrow+\infty}\frac{\left\vert R_\alpha(\widehat{\textbf{p}}_{(Y|X)}^{(n)})-R_\alpha(\textbf{p}_{(Y|X)})\right\vert}{a_{X,n}}\leq A_{R,\alpha}(\textbf{p}_{(Y|X)}),\ \ \text{a.s.}\\
&&\label{cond-rey-an} \sqrt{n}\left(R_\alpha(\widehat{\textbf{p}}_{(Y|X)}^{(n)})-R_\alpha(\textbf{p}_{(Y|X)}) \right)\stackrel{\mathcal{D} }{\rightsquigarrow}\mathcal{N}\left(0, \sigma_{R,\alpha}^2(\textbf{p}_{(Y|X)})\right),\ \ \text{as}\ \ n\rightarrow+\infty. 
\end{eqnarray}
\end{proposition}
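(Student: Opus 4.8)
The plan is to reduce the statement about $R_\alpha(\widehat{\textbf{p}}_{(Y|X)}^{(n)})$ to already-established limit theorems for the power sums $\mathcal{S}_\alpha(\widehat{\textbf{p}}_{(X,Y)}^{(n)})$ and $\mathcal{S}_\alpha(\widehat{\textbf{p}}_{X}^{(n)})$, using the representation
$$
R_\alpha(\widehat{\textbf{p}}_{(Y|X)}^{(n)})=\frac{1}{1-\alpha}\Bigl(\log\mathcal{S}_\alpha(\widehat{\textbf{p}}_{(X,Y)}^{(n)})-\log\mathcal{S}_\alpha(\widehat{\textbf{p}}_{X}^{(n)})\Bigr).
$$
First I would record that each empirical marginal $\widehat{p}_{X,i}^{(n)}$ and each $\widehat{p}_{Z,k}^{(n)}$ is strongly consistent (as already noted after \eqref{vars}) and that under assumption \eqref{BD} all the limiting $p_{Z,k}$, $p_{X,i}$ are bounded away from $0$, so $\mathcal{S}_\alpha(\textbf{p}_{(X,Y)})>0$ and $\mathcal{S}_\alpha(\textbf{p}_X)>0$; hence $\log$ is smooth at the relevant points and the ratio is well defined for $n$ large, almost surely.

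For the almost-sure rate \eqref{cond-rey-as}, I would use the mean value theorem on $x\mapsto x^\alpha$ to bound $|(\widehat{p}_{X,i}^{(n)})^\alpha-(p_{X,i})^\alpha|\le \alpha\,\xi_{i}^{\alpha-1}|\widehat{p}_{X,i}^{(n)}-p_{X,i}|$ with $\xi_i$ between the two arguments, and likewise for the $Z$-coordinates; summing gives
$$
\bigl|\mathcal{S}_\alpha(\widehat{\textbf{p}}_{X}^{(n)})-\mathcal{S}_\alpha(\textbf{p}_X)\bigr|\le \alpha\Bigl(\sum_{i\in I}(p_{X,i})^{\alpha-1}\Bigr)a_{X,n}(1+o(1)),
$$
and similarly for the joint power sum with $a_{Z,n}$ (noting $a_{X,n}\le s\,a_{Z,n}$, or just bounding both by a constant times $a_{X,n}$ as the statement does). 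Applying the MVT again to $\log$ at the point $\mathcal{S}_\alpha(\textbf{p}_X)>0$, dividing by $a_{X,n}$, taking $\limsup$, and using $a_{X,n}\to 0$ a.s. to kill the $o(1)$, yields the constant $A_{R,\alpha}(\textbf{p}_{(Y|X)})$ displayed before the proposition. The two terms in $A_{R,\alpha}$ correspond exactly to the joint-sum contribution (first term, with $\sum_k (p_{Z,k})^{\alpha-1}$) and the marginal contribution (second term, with $\sum_i (p_{X,i})^{\alpha-1}$), with the common factor $\alpha/|\alpha-1|$ coming from the derivative of $x\mapsto x^\alpha$ and of $\tfrac{1}{1-\alpha}\log$.

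For the asymptotic normality \eqref{cond-rey-an}, I would invoke the joint Gaussian limit of the empirical multinomial vector $(\rho_n(p_{Z,k}),k\in K)\rightsquigarrow G(\textbf{p}_Z)\sim\mathcal N(0,\Sigma_{\textbf{p}_Z})$ from \eqref{vars}, then apply the multivariate delta method to the smooth map
$$
\textbf{p}_Z\longmapsto \frac{1}{1-\alpha}\Bigl(\log\sum_{k\in K}p_{Z,k}^{\alpha}-\log\sum_{i\in I}\bigl(\textstyle\sum_{j}p_{Z,\delta_i^j}\bigr)^{\alpha}\Bigr),
$$
whose validity is guaranteed by \eqref{BD}. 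Equivalently—and this is the route that matches the notation in the statement—I would first establish, via the one-dimensional delta method applied to $u\mapsto \tfrac{1}{1-\alpha}\log u$, that $\sqrt n(\tfrac{1}{1-\alpha}\log\mathcal{S}_\alpha(\widehat{\textbf{p}}_{(X,Y)}^{(n)})-\tfrac{1}{1-\alpha}\log\mathcal{S}_\alpha(\textbf{p}_{(X,Y)}))\rightsquigarrow G_{R,\alpha}(\textbf{p}_{(X,Y)})$ with variance \eqref{sig_ralpha_xy}, and similarly $\sqrt n(\tfrac{1}{1-\alpha}\log\mathcal{S}_\alpha(\widehat{\textbf{p}}_{X}^{(n)})-\tfrac{1}{1-\alpha}\log\mathcal{S}_\alpha(\textbf{p}_X))\rightsquigarrow G_{\mathcal R,\alpha}(\textbf{p}_X)$ with variance \eqref{sig_ralpha_x}; each of these is exactly (a case of) the R\'enyi-entropy CLT already proved for a single p.m.f., so I would cite Proposition~1 of \cite{ba-mutual-entrop} rather than redo it. Since both are smooth functionals of the \emph{same} underlying empirical vector $\widehat{\textbf p}_Z^{(n)}$, the pair $(G_{\mathcal R,\alpha}(\textbf{p}_X),G_{R,\alpha}(\textbf{p}_{(X,Y)}))$ is jointly Gaussian, and the difference $G_{R,\alpha}(\textbf{p}_{(X,Y)})-G_{\mathcal R,\alpha}(\textbf{p}_X)$ has variance $\sigma_{R,\alpha}^{2}(\textbf{p}_{(X,Y)})+\sigma_{\mathcal R,\alpha}^{2}(\textbf{p}_X)-2\,\mathrm{Cov}(\cdot,\cdot)$; the sign convention in \eqref{sig_R_yx} is a $+2\,\mathrm{Cov}$ because of the opposite signs $\tfrac{1}{1-\alpha}$ vs.\ (effectively) $\tfrac{1}{\alpha-1}$ carried by the two gradients, so I would be careful to track that sign. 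The main obstacle is precisely the bookkeeping of this covariance term: one must write both gradients in the coordinates of $\textbf p_Z$ (using $p_{X,i}=\sum_{j}p_{Z,\delta_i^j}$, so the marginal gradient has repeated blocks), contract against $\Sigma_{\textbf{p}_Z}$ from \eqref{vars}, and check that the cross term indeed collapses to the $\mathrm{Cov}$ appearing in \eqref{sig_R_yx}; the consistency and $\limsup$ parts are routine MVT estimates by comparison.
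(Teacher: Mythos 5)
Your proposal follows essentially the same route as the paper: decompose $R_\alpha(\widehat{\textbf{p}}_{(Y|X)}^{(n)})-R_\alpha(\textbf{p}_{(Y|X)})$ via the chain rule into the joint and marginal R\'enyi differences, control each by the already-established one-sample rate and CLT results, and recombine. If anything you are more careful than the paper on the normality step, since you explicitly justify the joint Gaussianity of the two limits (both being smooth functionals of the same empirical vector $\widehat{\textbf{p}}_Z^{(n)}$) before forming their difference, whereas the paper passes from the two marginal convergences to the combined variance with the covariance term without comment.
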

\begin{proof}

\noindent For $\alpha\in (0,1)\cup (1,+\infty),$ we have 
\begin{eqnarray}\notag
  R_\alpha(\widehat{\textbf{p}}_{(Y|X)}^{(n)})-R_\alpha(\textbf{p}_{(Y|X)})&=&\frac{1}{1-\alpha}\biggr[\log  \frac{ \mathcal{S}_\alpha(\widehat{\textbf{p}}_{(X,Y)}^{(n)} ) }{S_\alpha(\widehat{\textbf{p}}_X^{(n)})}-\log \frac{ S_\alpha(\textbf{p}_{(X,Y)}) }{ S_\alpha(\textbf{p}_X)}\biggr] \\
\notag &=&\frac{1}{1-\alpha}\biggr[\log \mathcal{S}_\alpha(\widehat{\textbf{p}}_{(X,Y)}^{(n)} )- \log S_\alpha(\textbf{p}_{(X,Y)}) \\
\notag && \ \ \ \ \ \ - \ \ \ \ \left(\log S_\alpha(\widehat{\textbf{p}}_X^{(n)})-\log  S_\alpha(\textbf{p}_X)\right)\biggr]
 \\
 \label{ral_xy}&=&R_\alpha(\widehat{\textbf{p}}_{(X,Y)}^{(n)} ) - R_\alpha(\textbf{p}_{(X,Y)}) -\left(R_\alpha(\widehat{\textbf{p}}_X^{(n)})-R_\alpha(\textbf{p}_X)\right).
\end{eqnarray} Hence 
 \begin{eqnarray*}
 \left\vert R_\alpha(\widehat{\textbf{p}}_{(Y|X)}^{(n)})-R_\alpha(\textbf{p}_{(Y|X)})\right\vert &\leq &\left \vert R_\alpha(\widehat{\textbf{p}}_{(X,Y)}^{(n)} ) - R_\alpha(\textbf{p}_{(X,Y)})\right\vert \\
&&\ \ \ \  \ \ \ \ + \ \ \  \  \ \ \ \left\vert R_\alpha(\widehat{\textbf{p}}_X^{(n)})-R_\alpha(\textbf{p}_X)\right\vert.
 \end{eqnarray*}
\noindent From Proposition 3 in \cite{ba-mutual-entrop} and Corollary 3 in \cite{baentrop}, we have  respectively 
\begin{eqnarray*}&&\limsup_{n\rightarrow +\infty}\frac{\left \vert R_\alpha(\widehat{\textbf{p}}_{(X,Y)}^{(n)} ) - R_\alpha(\textbf{p}_{(X,Y)})\right\vert}{ a_{Z,n} }\leq A_{R,\alpha}(\textbf{p}_{(X,Y)}),\ \ \text{a.s}\\
\text{and}\ \ &&
 \limsup_{n\rightarrow+\infty}\frac{\left\vert R_\alpha(\widehat{\textbf{p}}_X^{(n)})-R_\alpha(\textbf{p}_X)\right\vert}{ a_{X,n} }\leq A_{R,\alpha}(\textbf{p}_X),\ \ \text{a.s.}
\end{eqnarray*}where

\begin{eqnarray*}
A_{R,\alpha}(\textbf{p}_{(X,Y)})&=&\frac{\alpha}{\left\vert1-\alpha\right\vert \mathcal{S}_\alpha(\textbf{p}_{(X,Y)})}\sum_{k\in K}\left(p_{Z,k}\right)^{\alpha-1},\\
\text{and}\ \ A_{R,\alpha}(\textbf{p}_X)&=& \frac{\alpha}{\left\vert\alpha-1\right\vert \mathcal{S}_\alpha(\textbf{p}_X) } \sum_{i\in I}  (p_{X,i}) ^{\alpha-1}.
\end{eqnarray*}

Thus
\begin{eqnarray*}
\limsup_{n\rightarrow+\infty}\frac{\left\vert  R_\alpha(\widehat{\textbf{p}}_{(Y|X)}^{(n)})-R_\alpha(\textbf{p}_{(Y|X)})\right\vert}{a_{X,n}}&\leq & A_{R,\alpha}(\textbf{p}_{(X,Y)})+A_{R,\alpha}(\textbf{p}_X),\ \ \ \text{a.s.}\\
&\leq &\frac{\alpha}{\left\vert\alpha-1\right\vert   }\biggr[\frac{1}{\mathcal{S}_\alpha(\textbf{p}_{(X,Y)})} \sum_{k\in K}  (p_{Z,k}) ^{\alpha-1}\\
&&\ \ \ \ \ +\ \ \ \ \frac{1}{ \mathcal{S}_\alpha(\textbf{p}_X) } \sum_{i\in I}  (p_{X,i}) ^{\alpha-1}\biggr],\ \ \ \text{a.s.} ,
\end{eqnarray*}as desired, and claimed by 
\eqref{cond-rey-as}. Let's prove the claim \eqref{cond-rey-an}. We have, from \eqref{ral_xy}, 
\begin{eqnarray*}
\sqrt{n}\left(  R_\alpha(\widehat{\textbf{p}}_{(Y|X)}^{(n)})-R_\alpha(\textbf{p}_{(Y|X)})\right) &=&\sqrt{n}( R_\alpha(\widehat{\textbf{p}}_{(X,Y)}^{(n)} )) -  R_\alpha(\textbf{p}_{(X,Y)}) \\
&&\ \ \ \ - \ \ \  \sqrt{n}(R_\alpha(\widehat{\textbf{p}}_X^{(n)})-R_\alpha(\textbf{p}_X)).
\end{eqnarray*}
\noindent So, since again from the Proposition 3 in \cite{ba-mutual-entrop} and Corollary 3 in
\cite{baentrop} 
\begin{eqnarray*}
&&\sqrt{n}\left( R_\alpha( \widehat{\textbf{p}}_{(X,Y)}^{(n)})-R_\alpha(\textbf{p}_{(X,Y)})\right)\stackrel{\mathcal{D}}{ \rightsquigarrow} 
\mathcal{N}\left( 0,\sigma _{R,\alpha}^{2}(\textbf{p}_{(X,Y)})\right)\text{ as } n\rightarrow + \infty\\
&&\sqrt{n}(R_\alpha(\widehat{\textbf{p}}_X^{(n)})-R_\alpha(\textbf{p}_X))\stackrel{\mathcal{D}}{ \rightsquigarrow} 
\mathcal{N}\left( 0,\sigma_{\mathcal{R},\alpha}^{2}(\textbf{p}_X)\right)\text{ as } n\rightarrow + \infty,
\end{eqnarray*}where $\sigma _{R,\alpha}^{2}(\textbf{p}_{(X,Y)})$  and $ \sigma_{\mathcal{R},\alpha}^{2}(\textbf{p}_X)$ are given by \eqref{sig_ralpha_xy} and \eqref{sig_ralpha_x} respectively. Therefore   
\begin{eqnarray*}
\sqrt{n}\left(R_\alpha(\widehat{\textbf{p}}_{(Y|X)}^{(n)})-R_\alpha(\textbf{p}_{(Y|X)}) \right)\stackrel{\mathcal{D} }{\rightsquigarrow}\mathcal{N}\left(0,\sigma_{R,\alpha}^2(\textbf{p}_{(Y|X)})  \right),\ \ \text{as}\ \ n\rightarrow+\infty
\end{eqnarray*}where $\sigma_{R,\alpha}^2(\textbf{p}_{(Y|X)})$ is given by \eqref{sig_R_yx}. This proves the claim \eqref{cond-rey-an} and ends the proof of the proposition \ref{pro_cond_reyn_yx}.
 \end{proof}
 
\bigskip \noindent \noindent 
A similar proposition holds for the conditional R\'enyi entropy of $X$ given $Y$.
 
\noindent The proof
is omitted being similar
as that of Proposition \ref{pro_cond_reyn_yx}.\\

\noindent Denote

\begin{eqnarray*}
\notag A_{R,\alpha}(\textbf{p}_{(X/Y)})
&=& \frac{\alpha}{\left\vert\alpha-1\right\vert   }\biggr[\frac{1}{\sum_{k\in K}\left(p_{Z,k}\right) ^{\alpha}} \sum_{k\in K}  (p_{Z,k}) ^{\alpha-1}\\
&&\notag \ \ \ \ \ \ \ \ \ \ \ + \ \ \  \frac{1}{ \sum_{j\in J}  (p_{Y,j}) ^{\alpha}  } \sum_{j\in J}  (p_{Y,j}) ^{\alpha-1} \biggr],
\\
\label{sig_R_xy} \sigma_{R,\alpha}^2(\textbf{p}_{(X/Y)})&=&\sigma _{\mathcal{R},\alpha}^{2}(\textbf{p}_{Y})+\sigma _{R,\alpha}^{2}(\textbf{p}_{(X,Y)})\\
\notag && \ \ \ \  + \ \ \ 2\, \text{Cov}
\left(G_{R,\alpha}(\textbf{p}_{Y}),G_{R,\alpha}(\textbf{p}_{(X,Y)})\right),
\end{eqnarray*}
where 
\begin{eqnarray*}
&& G_{R,\alpha}(\textbf{p}_{Y}) \stackrel{d}{\sim}\mathcal{N}(0,\sigma _{\mathcal{R},\alpha}^{2}(\textbf{p}_{Y}))
\ \ \text{with}\\
&&\sigma _{\mathcal{R},\alpha}^{2}(\textbf{p}_{Y})=\frac{\alpha^2}{(1-\alpha)\left(\sum_{j\in J}( p_{Y,j})^\alpha
\right)^2}\biggr[\sum_{j\in J} (1- p_{Y,j} ) (p_{Y,j})^{2\alpha-1}\\
&&\nonumber \ \ \ \ \  \ \ \ \ \ \ \ \ \ \ \  \ \ \  \ \ \ \ \ \ \ \  \ \ \ - \ \ \  2\sum_{(j,j')\in J^2,j\neq j'}(p_{Y,j}p_{Y,j^{'}})^{\alpha-1/2}\biggr].
\end{eqnarray*} and $G_{R,\alpha}(\textbf{p}_{(X,Y)})$ is given by \eqref{gr_alpha_xy}.

\begin{proposition}\label{pro_cond_reyn_xy}
Under the same assumptions as in Proposition \ref{pro_cond_shan_yx}, the following asymptotic results hold
\begin{eqnarray}
&&\label{cond-rey-as_xy} \limsup_{n\rightarrow+\infty}\frac{\left\vert R_\alpha(\widehat{\textbf{p}}_{(X/Y)}^{(n)})-R_\alpha(\textbf{p}_{(X/Y)})\right\vert}{a_{Y,n}}\leq A_{R,\alpha}(\textbf{p}_{(X/Y)}),\ \ \text{a.s.}\\
&&\label{cond-rey-an_xy} \sqrt{n}\left(R_\alpha(\widehat{\textbf{p}}_{(X/Y)}^{(n)})-R_\alpha(\textbf{p}_{(X/Y)}) \right)\stackrel{\mathcal{D} }{\rightsquigarrow}\mathcal{N}\left(0, \sigma_{R,\alpha}^2(\textbf{p}_{(X/Y)})\right),\ \ \text{as}\ \ n\rightarrow+\infty. 
\end{eqnarray}
\end{proposition}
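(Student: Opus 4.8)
The plan is to copy, almost verbatim, the argument of Proposition \ref{pro_cond_reyn_yx}, simply interchanging the role of the $X$-marginal with that of the $Y$-marginal. First I would invoke the chain rule for the conditional R\'enyi entropy together with the plug-in formulas of Section \ref{sect_estimation} to obtain the exact identity
\begin{eqnarray*}
R_\alpha(\widehat{\textbf{p}}_{(X/Y)}^{(n)})-R_\alpha(\textbf{p}_{(X/Y)})&=&\left(R_\alpha(\widehat{\textbf{p}}_{(X,Y)}^{(n)})-R_\alpha(\textbf{p}_{(X,Y)})\right)\\
&&\ \ \ \ -\ \ \left(R_\alpha(\widehat{\textbf{p}}_Y^{(n)})-R_\alpha(\textbf{p}_Y)\right),
\end{eqnarray*}
which is the exact analogue of \eqref{ral_xy} with $\textbf{p}_Y$ in place of $\textbf{p}_X$, where $\widehat{\textbf{p}}_Y^{(n)}=(\widehat{p}_{Y,j}^{(n)})_{j\in J}$ is the marginal empirical law built from $Z_1,\dots,Z_n$ through \eqref{pynj}. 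This identity is the backbone of both claims.

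For the almost sure rate \eqref{cond-rey-as_xy} I would apply the triangle inequality to the identity above and bound the two summands separately: the joint term is controlled, via Proposition 3 in \cite{ba-mutual-entrop}, by $A_{R,\alpha}(\textbf{p}_{(X,Y)})\,a_{Z,n}$ up to a $o(a_{Z,n})$ remainder, and the marginal term is controlled, via Corollary 3 in \cite{baentrop}, by $A_{R,\alpha}(\textbf{p}_Y)\,a_{Y,n}$ up to $o(a_{Y,n})$, with $A_{R,\alpha}(\textbf{p}_Y)=\frac{\alpha}{|\alpha-1|\,\mathcal{S}_\alpha(\textbf{p}_Y)}\sum_{j\in J}(p_{Y,j})^{\alpha-1}$. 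Using the same comparison between the moduli $a_{Z,n}$ and $a_{Y,n}$ that is implicit in the proof of Proposition \ref{pro_cond_reyn_yx} (here $a_{Y,n}\le r\,a_{Z,n}$ and both are of the law-of-iterated-logarithm order), dividing through by $a_{Y,n}$ and passing to the $\limsup$ produces exactly the constant $A_{R,\alpha}(\textbf{p}_{(X,Y)})+A_{R,\alpha}(\textbf{p}_Y)=A_{R,\alpha}(\textbf{p}_{(X/Y)})$ displayed before the statement. For the central limit theorem \eqref{cond-rey-an_xy} I would multiply the same identity by $\sqrt{n}$ and note that both $\sqrt{n}(R_\alpha(\widehat{\textbf{p}}_{(X,Y)}^{(n)})-R_\alpha(\textbf{p}_{(X,Y)}))$ and $\sqrt{n}(R_\alpha(\widehat{\textbf{p}}_Y^{(n)})-R_\alpha(\textbf{p}_Y))$ are, by the delta method, smooth functionals of the one asymptotically Gaussian vector $(\rho_n(p_{Z,k}),k\in K)$ of \eqref{vars}; hence the pair is jointly asymptotically normal, and the difference is centered normal with variance $\sigma_{\mathcal{R},\alpha}^2(\textbf{p}_Y)+\sigma_{R,\alpha}^2(\textbf{p}_{(X,Y)})+2\,\text{Cov}\!\left(G_{R,\alpha}(\textbf{p}_Y),G_{R,\alpha}(\textbf{p}_{(X,Y)})\right)$, i.e. $\sigma_{R,\alpha}^2(\textbf{p}_{(X/Y)})$.

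The one step that is not a mere relabeling, and therefore the main obstacle, is the covariance term: because $\widehat{p}_{Y,j}^{(n)}=\sum_{i\in I}\widehat{p}_{Z,\delta_i^j}^{(n)}$ is a contraction of the very multinomial counts that also feed the joint power sum $\mathcal{S}_\alpha(\widehat{\textbf{p}}_{(X,Y)}^{(n)})$, the two Gaussian limits are genuinely correlated, and $\text{Cov}(G_{R,\alpha}(\textbf{p}_Y),G_{R,\alpha}(\textbf{p}_{(X,Y)}))$ must be obtained by propagating \emph{both} gradients (the one from the $Y$-analogue of \eqref{sig_ralpha_x} and the one from \eqref{sig_ralpha_xy}) through the covariance matrix $\Sigma_{\textbf{p}_Z}$ in \eqref{vars}. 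This is the same phenomenon that already appears in Proposition \ref{pro_cond_reyn_yx}, so I expect it to be entirely mechanical, but I would carry it out carefully and also verify the $(1-\alpha)$ versus $(1-\alpha)^2$ normalizations in \eqref{sig_ralpha_x}–\eqref{sig_ralpha_xy} so that the $Y$-version is stated consistently. With that in hand the proof is complete, and — exactly as the authors note for the $Y|X$ case — it may be abbreviated to ``proof omitted, being similar to that of Proposition \ref{pro_cond_reyn_yx}.''
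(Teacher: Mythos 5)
Your proposal is correct and follows exactly the route the paper intends: the paper omits this proof, stating it is identical to that of Proposition \ref{pro_cond_reyn_yx}, and your argument is precisely that proof with the roles of $\textbf{p}_X$ and $\textbf{p}_Y$ interchanged (chain-rule decomposition into joint minus marginal terms, triangle inequality plus the cited almost-sure rates for \eqref{cond-rey-as_xy}, and joint asymptotic normality of the two plug-in power-sum functionals for \eqref{cond-rey-an_xy}). Your additional remarks on the covariance term arising from the shared multinomial counts and on reconciling the moduli $a_{Z,n}$ and $a_{Y,n}$ are, if anything, more careful than the paper's own treatment of the $Y|X$ case.
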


 \bigskip \noindent (\textbf{E})  
Asymptotic limit of CTE estimate $T_\alpha(\widehat{\textbf{p}}_{(Y|X)}^{(n)})$
.\\

\noindent Denote

\begin{eqnarray}
\notag A_{T,\alpha}(\textbf{p}_{(Y|X)})&=&\frac{\alpha}{ |1-\alpha| \sum_{i\in I}( p_{X,i})^\alpha
} \biggr[ \frac{ \sum_{k\in K}\left(p_{Z,k}\right) ^{\alpha}}{\sum_{i\in I}( p_{X,i})^\alpha
} \sum_{i\in I}( p_{X,i})^{\alpha-1}\\
&&\notag \ \ \ \ \ \ \  + \ \ \ \sum_{k\in K}\left(p_{Z,k}\right)^{\alpha-1}\biggr]\\
\label{sig_T_yx}\sigma_{T,\alpha}^2(\textbf{p}_{(Y|X)} )&=&\sigma_{T,\alpha}^2(\textbf{p}_X)+\sigma_{T,\alpha}^2(\textbf{p}_{(X,Y)}) \\
\notag &&\ \ \ \ + \ \ \ 2\, \text{Cov}
\left(G_{T,\alpha}(\textbf{p}_X),G_{T,\alpha}(\textbf{p}_{(X,Y)}) \right)\end{eqnarray}where 
\begin{eqnarray*}
&&\notag \label{gt_alpha_x} G_{T,\alpha}(\textbf{p}_X) \stackrel{d}{\sim}\mathcal{N}(0,\sigma_{T,\alpha}^2(\textbf{p}_X))\ \ \text{with}\\
&& \label{sig_talpha_x} \sigma_{T,\alpha}^2(\textbf{p}_X)=
\left(\frac{\alpha}{1-\alpha}\right)^2\left(\frac{ \sum_{k\in K}\left(p_{Z,k}\right) ^{\alpha}   }{\left(\sum_{i\in I}( p_{X,i})^\alpha
\right)^2} \right)^2  \biggr[\sum_{i\in I} (1- p_{X,i} ) (p_{X,i})^{2\alpha-1}\\
&&\nonumber \ \ \ \  \ \ \ \ \ \ \  \ \ \ \ \ \ \ \ \ \ \ \ \  \ \ \  \ \ \ \ \ \ \  \ \ \ - \ \ \  2\sum_{(i,i')\in I^2,i\neq i'}(p_{X,i}p_{X,i^{'}})^{\alpha-1/2}\biggr]
\end{eqnarray*}and where
\begin{eqnarray}
&& \label{gt_alpha_xy} G_{T,\alpha}(\textbf{p}_{(X,Y)}) \stackrel{d}{\sim}\mathcal{N}(0,\sigma_{T,\alpha}^2(\textbf{p}_{(X,Y)}))\ \ \text{with}\\
&&  \label{sig_talpha_xy} \sigma_{T,\alpha}^2(\textbf{p}_{(X,Y)})=\left( \frac{\alpha}{1-\alpha}\right)^2\left( \frac{ 1}{\left(\sum_{i\in I}( p_{X,i})^\alpha
\right)^2}\right)^2\biggr[ \sum_{k\in K}(1-p_{Z,k})( p_{Z,k})^{2\alpha-1}\\
&&\nonumber \ \ \ \  \ \ \ \ \ \ \  \ \ \ \ \ \ \ \ \ \ \ \ \  \ \ \  \ \ \ \ \ - \ \ \ \  2 \sum_{(k,k')\in K^2,k\neq k'}\left( p_{Z,k} p_{Z,k'}\right)^{\alpha-1/2}\biggr].
\end{eqnarray}

\begin{proposition}\label{pro_cond_tsal_yx}
Under the same assumptions as in Proposition \ref{pro_cond_shan_yx}, the following asymptotic results hold \begin{eqnarray}\label{tsal_cond_as}
&&\limsup_{n\rightarrow+\infty}\frac{ |T_\alpha(\widehat{\textbf{p}}_{(Y|X)}^{(n)})-T_\alpha\left(\textbf{p}_{(Y|X)}\right)|}{a_{Z,n}}\leq   A_{T_\alpha}\left(\textbf{p}_{(Y|X)}\right)\ \ ,\text{a.s.}\\
&&\sqrt{n}\left(T_\alpha(\widehat{\textbf{p}}_{(Y|X)}^{(n)})-T_\alpha(\textbf{p}_{(Y|X)})\right)\stackrel{\mathcal{D} }{\rightsquigarrow}\mathcal{N}\left(0,\sigma_{T,\alpha}^2(\textbf{p}_{(Y|X)}\right),\ \ \text{as}\ \ n\rightarrow+\infty.\label{tsal_norm}
\end{eqnarray}
\end{proposition}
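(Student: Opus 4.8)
The plan is to reduce the statement to the asymptotics of the power-sum estimators, exactly as was done for the R\'enyi case in the proof of Proposition \ref{pro_cond_reyn_yx}. Write $\mathcal{S}_{XY}=\mathcal{S}_\alpha(\textbf{p}_{(X,Y)})$, $\widehat{\mathcal{S}}_{XY}=\mathcal{S}_\alpha(\widehat{\textbf{p}}_{(X,Y)}^{(n)})$, $\mathcal{S}_{X}=\mathcal{S}_\alpha(\textbf{p}_{X})$ and $\widehat{\mathcal{S}}_{X}=\mathcal{S}_\alpha(\widehat{\textbf{p}}_{X}^{(n)})$. Then formula \eqref{cond-tsalp-def} and its plug-in counterpart give
\begin{equation*}
T_\alpha(\widehat{\textbf{p}}_{(Y|X)}^{(n)})-T_\alpha(\textbf{p}_{(Y|X)}) = \frac{1}{1-\alpha}\left(\frac{\widehat{\mathcal{S}}_{XY}}{\widehat{\mathcal{S}}_{X}}-\frac{\mathcal{S}_{XY}}{\mathcal{S}_{X}}\right) = \frac{1}{1-\alpha}\left(\frac{\widehat{\mathcal{S}}_{XY}-\mathcal{S}_{XY}}{\widehat{\mathcal{S}}_{X}} - \frac{\mathcal{S}_{XY}\,(\widehat{\mathcal{S}}_{X}-\mathcal{S}_{X})}{\mathcal{S}_{X}\,\widehat{\mathcal{S}}_{X}}\right),
\end{equation*}
so that everything is controlled by the two elementary fluctuations $\widehat{\mathcal{S}}_{XY}-\mathcal{S}_{XY}$ and $\widehat{\mathcal{S}}_{X}-\mathcal{S}_{X}$. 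Under assumption \eqref{BD} one has $\widehat{\mathcal{S}}_{X}\stackrel{a.s.}{\longrightarrow}\mathcal{S}_{X}>0$, so in both denominators $\widehat{\mathcal{S}}_{X}$ may be replaced by $\mathcal{S}_{X}$ up to a factor tending to $1$.

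Next I would import the two inputs already used for the R\'enyi estimate: from the results of \cite{ba-mutual-entrop} on the joint power sum (built from $Z_1,\dots,Z_n$) and of \cite{baentrop} on the marginal power sum $\mathcal{S}_\alpha(\textbf{p}_X)$ — the latter viewed, through \eqref{pij2}, as a functional of the empirical law $\widehat{\textbf{p}}_Z^{(n)}$ — one obtains the a.s. rates $\limsup_n|\widehat{\mathcal{S}}_{XY}-\mathcal{S}_{XY}|/a_{Z,n}\le\alpha\sum_{k\in K}(p_{Z,k})^{\alpha-1}$ and $\limsup_n|\widehat{\mathcal{S}}_{X}-\mathcal{S}_{X}|/a_{Z,n}\le\alpha\sum_{i\in I}(p_{X,i})^{\alpha-1}$, together with the central limit theorems stating that $\sqrt{n}(\widehat{\mathcal{S}}_{XY}-\mathcal{S}_{XY})$ and $\sqrt{n}(\widehat{\mathcal{S}}_{X}-\mathcal{S}_{X})$ are asymptotically centered Gaussian with the variances appearing inside \eqref{sig_talpha_xy} and \eqref{sig_talpha_x}. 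The point that really needs the joint structure is the following: since both $\widehat{\mathcal{S}}_{XY}$ and $\widehat{\mathcal{S}}_{X}$ are smooth functionals of the \emph{single} empirical measure $\widehat{\textbf{p}}_Z^{(n)}$, the pair $\bigl(\sqrt{n}(\widehat{\mathcal{S}}_{XY}-\mathcal{S}_{XY}),\,\sqrt{n}(\widehat{\mathcal{S}}_{X}-\mathcal{S}_{X})\bigr)$ is \emph{jointly} asymptotically bivariate normal, by the Gaussian limit \eqref{vars} of the multinomial law and the delta method applied to the map $\textbf{p}\mapsto\bigl(\sum_{k\in K}p_k^{\alpha},\,\sum_{i\in I}(\sum_{j}p_{\delta_i^j})^{\alpha}\bigr)$; this is the origin of the covariance term in \eqref{sig_T_yx}.

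With these inputs both conclusions follow routinely. For the almost sure bound \eqref{tsal_cond_as}: take absolute values in the displayed identity, use the triangle inequality, divide by $a_{Z,n}$, let $n\to\infty$, and invoke $\widehat{\mathcal{S}}_{X}/\mathcal{S}_{X}\to1$ together with the two a.s. rates above; this yields $\limsup_n|T_\alpha(\widehat{\textbf{p}}_{(Y|X)}^{(n)})-T_\alpha(\textbf{p}_{(Y|X)})|/a_{Z,n}\le\frac{\alpha}{|1-\alpha|\mathcal{S}_{X}}\bigl(\sum_{k\in K}(p_{Z,k})^{\alpha-1}+(\mathcal{S}_{XY}/\mathcal{S}_{X})\sum_{i\in I}(p_{X,i})^{\alpha-1}\bigr)$, which is exactly $A_{T,\alpha}(\textbf{p}_{(Y|X)})$. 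For the central limit theorem \eqref{tsal_norm}: multiply the identity by $\sqrt{n}$; the \emph{second-order} remainders produced when $\widehat{\mathcal{S}}_{X}$ is replaced by $\mathcal{S}_{X}$ are of order $n^{-1/2}$ and hence negligible, so $\sqrt{n}(T_\alpha(\widehat{\textbf{p}}_{(Y|X)}^{(n)})-T_\alpha(\textbf{p}_{(Y|X)}))$ equals, up to $o_P(1)$, the fixed linear combination $\frac{1}{1-\alpha}\bigl(\mathcal{S}_{X}^{-1}\sqrt{n}(\widehat{\mathcal{S}}_{XY}-\mathcal{S}_{XY}) - \mathcal{S}_{XY}\mathcal{S}_{X}^{-2}\sqrt{n}(\widehat{\mathcal{S}}_{X}-\mathcal{S}_{X})\bigr)$ of the asymptotically bivariate normal vector of the previous paragraph; it therefore converges in distribution to a centered Gaussian whose variance is the associated quadratic form. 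Identifying the first summand with $G_{T,\alpha}(\textbf{p}_{(X,Y)})$, the second with $-G_{T,\alpha}(\textbf{p}_X)$, and their cross-covariance with the last term of \eqref{sig_T_yx}, this quadratic form is precisely $\sigma_{T,\alpha}^2(\textbf{p}_{(Y|X)})$; Slutsky's lemma then closes the argument.

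The one step I expect to be genuinely delicate is the \emph{joint} limit of $(\widehat{\mathcal{S}}_{XY},\widehat{\mathcal{S}}_{X})$ and the bookkeeping of its covariance: the two power sums are strongly dependent, being built from the same sample with $\widehat{\textbf{p}}_X^{(n)}$ an aggregation of $\widehat{\textbf{p}}_Z^{(n)}$, so the variance in \eqref{sig_T_yx} is genuinely not the sum $\sigma_{T,\alpha}^2(\textbf{p}_X)+\sigma_{T,\alpha}^2(\textbf{p}_{(X,Y)})$ and the cross term must be carried through. The rest — the ratio linearization, the negligibility of the remainders under \eqref{BD}, and the first-order expansion $p_k^{\alpha}-q_k^{\alpha}\sim\alpha q_k^{\alpha-1}(p_k-q_k)$ underlying the a.s. rates — is mechanical. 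As an aside, \eqref{tsal_norm} can alternatively be obtained from Proposition \ref{pro_cond_reyn_yx} by applying the delta method to the smooth bijection $x\mapsto\frac{1}{1-\alpha}(e^{(1-\alpha)x}-1)$ that links $T_\alpha(\textbf{p}_{(Y|X)})$ to $R_\alpha(\textbf{p}_{(Y|X)})$, but the direct route above is what delivers the constants in the stated form.
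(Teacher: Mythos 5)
Your proposal is correct and follows essentially the same route as the paper's own proof: linearize the ratio $\mathcal{S}_\alpha(\widehat{\textbf{p}}_{(X,Y)}^{(n)})/\mathcal{S}_\alpha(\widehat{\textbf{p}}_{X}^{(n)})$ (your decomposition is an algebraically equivalent regrouping of the paper's), import the almost sure rates and central limit theorems for the two power sums from \cite{baentrop} and \cite{ba-mutual-entrop}, and combine them to recover $A_{T,\alpha}(\textbf{p}_{(Y|X)})$ and $\sigma_{T,\alpha}^2(\textbf{p}_{(Y|X)})$. If anything, you are more explicit than the paper on the one point that needs care --- the \emph{joint} asymptotic normality of the two power-sum fluctuations via the delta method on the single multinomial limit, which is what legitimizes the covariance term in \eqref{sig_T_yx}.
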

\begin{proof}For $\alpha\in (0,1)\cup(1,+\infty)$, we have 
\begin{eqnarray*}
T_\alpha(\widehat{\textbf{p}}_{(Y|X)}^{(n)})- T_\alpha(\textbf{p}_{(Y|X)})&=&\frac{1}{1-\alpha}\left( \frac{\displaystyle \sum_{k\in K}\left(\widehat{p}_{Z,k}^{(n)}\right)^\alpha}{\displaystyle \sum_{i\in I}\left(\widehat{p}_{X,i}^{(n)}\right)^\alpha}-\frac{\displaystyle \sum_{k\in K}(p_{Z,k})^\alpha}{\displaystyle \sum_{i\in I}(p_{X,i})^\alpha}\right)\\
&=&\frac{1}{1-\alpha}\left(\frac{ \mathcal{S}_\alpha(\widehat{\textbf{p}}_{(X,Y)}^{(n)} ) }{S_\alpha(\widehat{\textbf{p}}_X^{(n)})}-\frac{ S_\alpha(\textbf{p}_{(X,Y)}) }{ S_\alpha(\textbf{p}_X)}\right).
\end{eqnarray*}
\noindent $T_\alpha(\widehat{\textbf{p}}_{(Y|X)}^{(n)})- T_\alpha(\textbf{p}_{(Y|X)})$ can be re-expressed as 
\begin{eqnarray*}
T_\alpha(\widehat{\textbf{p}}_{(Y|X)}^{(n)})- T_\alpha(\textbf{p}_{(Y|X)})&=&\frac{1}{1-\alpha}\biggr[\mathcal{S}_\alpha(\widehat{\textbf{p}}_{(X,Y)}^{(n)} )\left(\frac{ S_\alpha(\textbf{p}_X)-S_\alpha(\widehat{\textbf{p}}_X^{(n)}) }{S_\alpha(\widehat{\textbf{p}}_X^{(n)})S_\alpha(\textbf{p}_X)}\right)\\
&&\ \ \ +\ \ \frac{1}{S_\alpha(\textbf{p}_X)} ( \mathcal{S}_\alpha(\widehat{\textbf{p}}_{(X,Y)}^{(n)} )- S_\alpha(\textbf{p}_{(X,Y)}))\biggr].
\end{eqnarray*}
\noindent Asymptotically, using Proposition 2 in \cite{ba-mutual-entrop} , we have 
\begin{eqnarray*}
&&\label{asymprep}  T_\alpha(\widehat{\textbf{p}}_{(Y|X)}^{(n)})- T_\alpha(\textbf{p}_{(Y|X)}) \approx \frac{1}{1-\alpha}\biggr[\frac{  S_\alpha(\textbf{p}_{(X,Y)}) }{(S_\alpha(\textbf{p}_X))^2}(S_\alpha(\textbf{p}_X)-S_\alpha(\widehat{\textbf{p}}_X^{(n)}))\\
 &&\ \ \ \ \ \ \ \ \ \ \ \ \ \ \ \ \ \ \ \ \ \ \ \ \ \ \ \ \ \ \ \ \ \ \ \ \ \ \  +\ \ \ \frac{1}{S_\alpha(\textbf{p}_X)} ( \mathcal{S}_\alpha(\widehat{\textbf{p}}_{(X,Y)}^{(n)} )- S_\alpha(\textbf{p}_{(X,Y)})) \biggr]
\end{eqnarray*}
 \noindent First from Corollary 1 in
\cite{baentrop} and Proposition 2 in \cite{ba-mutual-entrop}, we have respectively 
\begin{eqnarray*}
&& \limsup_{n\rightarrow+\infty}\frac{ \left\vert S_\alpha(\widehat{\textbf{p}}_X^{(n)})-S_\alpha(\textbf{p}_X)\right \vert }{a_{X,n}} \leq  A_{\mathcal{S}_\alpha}(\textbf{p}_{X}),\ \ \ \text{a.s.}\\
\text{and}\ \ 
&& \limsup_{n\rightarrow+\infty}\frac{\left \vert \mathcal{S}_\alpha(\widehat{\textbf{p}}_{(X,Y)}^{(n)} )- S_\alpha(\textbf{p}_{(X,Y)})\right\vert
}{a_{Z,n}}\leq  A_{\mathcal{S}_\alpha}(\textbf{p}_{(X,Y)}) ,\ \ \ \text{a.s.}
\end{eqnarray*}
\noindent where 
\begin{eqnarray*}
A_{\mathcal{S}_\alpha}(\textbf{p}_{X})= \alpha \sum_{i\in I}( p_{X,i})^{\alpha-1}\ \
\text{and}\ \  A_{\mathcal{S}_\alpha}(\textbf{p}_{(X,Y)})&=& \alpha\sum_{k\in K}\left(p_{Z,k}\right)^{\alpha-1},
\end{eqnarray*}

\noindent which entails
\begin{eqnarray*}
\limsup_{n\rightarrow+\infty}\frac{\left\vert  T_\alpha(\widehat{\textbf{p}}_{(Y|X)}^{(n)})- T_\alpha(\textbf{p}_{(Y|X)})
\right\vert}{a_{Z,n}} 
&\leq &\frac{\alpha}{|1-\alpha|}\biggr[ \frac{\displaystyle \sum_{k\in K}\left(p_{Z,k}\right) ^{\alpha}}{\displaystyle\left( \sum_{i\in I}( p_{X,i})^\alpha\right)^2
}\sum_{i\in I}( p_{X,i})^{\alpha-1}\\
&& \ \ \ \ \ \ + \ \ \ \ \ \ \frac{1}{ \sum_{i\in I}( p_{X,i})^\alpha} \sum_{k\in K}\left(p_{Z,k}\right)^{\alpha-1}\biggr]
\\
&\leq &\frac{\alpha}{ |1-\alpha| \sum_{i\in I}( p_{X,i})^\alpha
} \biggr[ \frac{ \sum_{k\in K}\left(p_{Z,k}\right) ^{\alpha}}{\sum_{i\in I}( p_{X,i})^\alpha
} \sum_{i\in I}( p_{X,i})^{\alpha-1}\\
&&\ \ \ \ \ \ \ \ \ \ \ \ + \ \ \   \sum_{k\in K}\left(p_{Z,k}\right)^{\alpha-1}\biggr],\ \ \ \text{a.s.}
\end{eqnarray*}which proves the claim \eqref{tsal_cond_as}.\\

\noindent  Second from Corollary 1 in
\cite{baentrop} and Proposition 2 in \cite{ba-mutual-entrop}, we have, respectively, as $ n\rightarrow+\infty,$
\begin{eqnarray*}
&&\sqrt{n}(S_\alpha(\textbf{p}_X)-S_\alpha(\widehat{\textbf{p}}_X^{(n)})\stackrel{\mathcal{D} }{\rightsquigarrow}\mathcal{N}(0,\sigma_{\mathcal{S}_\alpha}^2(\textbf{p}_X))
\\
\text{and}\ \  &&\sqrt{n}\left(\mathcal{S}_\alpha(\widehat{\textbf{p}}_{(X,Y)}^{(n)} )-\mathcal{S}_\alpha(\textbf{p}_{(X,Y)})\right)\stackrel{\mathcal{D} }{\rightsquigarrow}\mathcal{N}(0,\sigma_{\mathcal{S}_\alpha}^2(\textbf{p}_{(X,Y)}))
\end{eqnarray*}

where \begin{eqnarray*}
\sigma_{\mathcal{S}_\alpha}^2(\textbf{p}_X)&=&\alpha^2\biggr[ \sum_{i\in I} (1- p_{X,i} ) (p_{X,i})^{2\alpha-1}\\
&&\ \ \ \ -\ \   2\sum_{(i,i')\in I^2,i\neq i'}(p_{X,i}p_{X,i^{'}})^{\alpha-1/2}\biggr]\\
\text{and}\ \ 
\sigma_{\mathcal{S}_\alpha}^2(\textbf{p}_{(X,Y)})&=&\alpha^2\biggr[ \sum_{k\in K}(1-p_{Z,k})( p_{Z,k})^{2\alpha-1}\ \ \ \\
&&\ \ \ \ - \ \  2 \sum_{(k,k')\in K^2,k\neq k'}\left( p_{Z,k} p_{Z,k'}\right)^{\alpha-1/2}\biggr]
.
\end{eqnarray*}

\noindent So that, as $n\rightarrow+\infty$, 
\begin{eqnarray*}
&& \sqrt{n}\left( T_\alpha(\widehat{\textbf{p}}_{(Y|X)}^{(n)})- T_\alpha(\textbf{p}_{(Y|X)})\right) \stackrel{\mathcal{D} }{\rightsquigarrow} \frac{1}{1-\alpha}\biggr[\frac{  S_\alpha(\textbf{p}_{(X,Y)}) }{(S_\alpha(\textbf{p}_X))^2}G_{\mathcal{S}_\alpha}(\textbf{p}_X)\\
\notag &&\ \ \ \ \ \ \ \ \ \ \ \ \ \ \ \ \ \ \ \ \ \ \ \ \ \ \ \ \ \ \ \ \ \ \ \ \ \ \  +\ \ \ \frac{1}{S_\alpha(\textbf{p}_X)} G_{\mathcal{S}_\alpha}(\textbf{p}_{(X,Y)}) \biggr]
\end{eqnarray*} where \begin{eqnarray*}
G_{\mathcal{S}_\alpha}(\textbf{p}_X)\stackrel{d }{\sim} \mathcal{N}(0,\sigma_{\mathcal{S}_\alpha}^2(\textbf{p}_X))\ \ \text{and}\ \ G_{\mathcal{S}_\alpha}(\textbf{p}_{(X,Y)})\stackrel{d }{\sim}  \mathcal{N}(0,\sigma_{\mathcal{S}_\alpha}^2(\textbf{p}_{(X,Y)})).
\end{eqnarray*}
\noindent Therefore  \begin{eqnarray*}
\sqrt{n}\left(T_\alpha(\widehat{\textbf{p}}_{(Y|X)}^{(n)})-T_\alpha\left(\textbf{p}_{(Y|X)}\right)\right)\stackrel{\mathcal{D} }{\rightsquigarrow}\mathcal{N}\left(0,\sigma_{T,\alpha}^2(\textbf{p}_{(Y|X)} )\right),\ \ \text{as}\ \ n\rightarrow+\infty.
\end{eqnarray*}where $\sigma_{T,\alpha}^2(\textbf{p}_{(Y|X)} )$ is given by \eqref{sig_T_yx}.\\

\bigskip \noindent This proves the claim \eqref{tsal_norm} and ends the proof of the Proposition \ref{pro_cond_tsal_yx}.
\end{proof}

\bigskip \noindent \noindent 
A similar proposition holds for the conditional Tsallis entropy of $X$ given $Y$.
\\

\noindent 
The proof
is omitted being similar as that of Proposition \ref{pro_cond_tsal_yx}.\\

\noindent Denote
\begin{eqnarray*}
\notag A_{T,\alpha}(\textbf{p}_{(X/Y)})&=&\frac{\alpha}{ |1-\alpha| \sum_{j\in J}( p_{Y,j})^\alpha
} \biggr[ \frac{ \sum_{k\in K}\left(p_{Z,k}\right) ^{\alpha}}{ \sum_{j\in J}( p_{Y,j})^\alpha
}  \sum_{j\in J}( p_{Y,j})^{\alpha-1}\\
&&\notag \ \ \ \ \ \ \  + \ \ \ \sum_{k\in K}\left(p_{Z,k}\right)^{\alpha-1}\biggr]\\
\label{sig_T_xy}\sigma_{T,\alpha}^2(\textbf{p}_{(X/Y)}&=&\sigma_{T,\alpha}^2(\textbf{p}_Y) + \sigma_{T,\alpha}^2(\textbf{p}_{(X,Y)}) +2\, \text{Cov}
\left(G_{T,\alpha} (\textbf{p}_Y),G_{T,\alpha}(\textbf{p}_{(X,Y)})  \right)\end{eqnarray*}where 
$G_{T,\alpha}(\textbf{p}_{(X,Y)})$ and $\sigma_{T,\alpha}^2(\textbf{p}_{(X,Y)})$ are as in \eqref{gt_alpha_xy} and \eqref{sig_talpha_xy}
and 
\begin{eqnarray*}\label{gt_alpha_y}
&&G_{T,\alpha} (\textbf{p}_Y)\stackrel{d }{\sim}\mathcal{N}\left(0,\sigma_{T,\alpha}^2(\textbf{p}_Y)\right)\ \ \text{with}\\
&&\sigma_{T,\alpha}^2(\textbf{p}_Y)=\left( \frac{\alpha}{1-\alpha}\right)^2\left( \frac{ \sum_{k\in K}\left(p_{Z,k}\right) ^{\alpha}}{\left(\sum_{j\in J}( p_{Y,j})^\alpha
\right)^2}\right)^2\biggr[\sum_{j\in J}( 1-p_{Y,j}) (p_{Y,j})^{2\alpha-1}\\
&&\nonumber \ \ \ \ \  \ \ \  \ \ \ \ \ \ \ \  \ \ \ - \ \ \  2\sum_{(j,j')\in J^2,j\neq j'}(p_{Y,j}p_{Y,j^{'}})^{\alpha-1/2}\biggr].
\end{eqnarray*}

\begin{proposition}\label{pro_cond_tsal_xy}
Under the same assumptions as in Proposition \ref{pro_cond_shan_yx}, the following asymptotic results hold \begin{eqnarray*}\label{tsal_cond_as_xy}
&&\limsup_{n\rightarrow+\infty}\frac{ \left\vert T_\alpha(\widehat{\textbf{p}}_{(X/Y)}^{(n)})-T_\alpha(\textbf{p}_{(X/Y)})\right\vert}{a_{Z,n}}\leq   A_{T_\alpha}\left(\textbf{p}_{(X/Y)}\right)\ \ ,\text{a.s.}\\
&&\sqrt{n}\left(T_\alpha(\widehat{\textbf{p}}_{(X/Y)}^{(n)})-T_\alpha(\textbf{p}_{(X/Y)})\right)\stackrel{\mathcal{D} }{\rightsquigarrow}\mathcal{N}\left(0,\sigma_{T,\alpha}^2(\textbf{p}_{(X/Y)}\right),\ \ \text{as}\ \ n\rightarrow+\infty.\label{tsal_norm_xy}
\end{eqnarray*}
\end{proposition}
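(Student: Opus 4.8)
\noindent The plan is to follow the proof of Proposition~\ref{pro_cond_tsal_yx} line by line, interchanging the roles of the two marginals: every occurrence of the power sum $\mathcal{S}_\alpha(\textbf{p}_X)$ and of its empirical version $\mathcal{S}_\alpha(\widehat{\textbf{p}}_X^{(n)})$ is replaced by $\mathcal{S}_\alpha(\textbf{p}_Y)$ and $\mathcal{S}_\alpha(\widehat{\textbf{p}}_Y^{(n)})$, and the block family $\{A_i,\ i\in I\}$ by the block family $\{B_j,\ j\in J\}$ of \eqref{pynj}. Concretely, for $\alpha\in(0,1)\cup(1,+\infty)$ one writes, exactly as in the $(Y|X)$ case,
\begin{equation*}
T_\alpha(\widehat{\textbf{p}}_{(X/Y)}^{(n)})-T_\alpha(\textbf{p}_{(X/Y)})
=\frac{1}{1-\alpha}\left[\mathcal{S}_\alpha(\widehat{\textbf{p}}_{(X,Y)}^{(n)})\,\frac{\mathcal{S}_\alpha(\textbf{p}_Y)-\mathcal{S}_\alpha(\widehat{\textbf{p}}_Y^{(n)})}{\mathcal{S}_\alpha(\widehat{\textbf{p}}_Y^{(n)})\,\mathcal{S}_\alpha(\textbf{p}_Y)}+\frac{\mathcal{S}_\alpha(\widehat{\textbf{p}}_{(X,Y)}^{(n)})-\mathcal{S}_\alpha(\textbf{p}_{(X,Y)})}{\mathcal{S}_\alpha(\textbf{p}_Y)}\right].
\end{equation*}
Assumption~\eqref{BD} keeps all denominators bounded away from $0$, and $\widehat{p}_{Y,j}^{(n)}\stackrel{a.s.}{\longrightarrow}p_{Y,j}$ (noted right after \eqref{pynj}) makes the prefactor of the first bracketed term converge a.s. to $\mathcal{S}_\alpha(\textbf{p}_{(X,Y)})/(\mathcal{S}_\alpha(\textbf{p}_Y))^2$, which linearizes the right-hand side.

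\noindent For the almost-sure claim, I would apply Corollary~1 in \cite{baentrop} to the marginal $\textbf{p}_Y$ --- legitimate because, by \eqref{pynj}, $\widehat{p}_{Y,j}^{(n)}=n^{-1}\sum_{\ell}1_{B_j}(Z_\ell)$ is itself an empirical p.m.f. --- to obtain $\limsup_n|\mathcal{S}_\alpha(\widehat{\textbf{p}}_Y^{(n)})-\mathcal{S}_\alpha(\textbf{p}_Y)|/a_{Y,n}\leq\alpha\sum_{j\in J}(p_{Y,j})^{\alpha-1}$ a.s., and Proposition~2 in \cite{ba-mutual-entrop} for the joint power sum to obtain $\limsup_n|\mathcal{S}_\alpha(\widehat{\textbf{p}}_{(X,Y)}^{(n)})-\mathcal{S}_\alpha(\textbf{p}_{(X,Y)})|/a_{Z,n}\leq\alpha\sum_{k\in K}(p_{Z,k})^{\alpha-1}$ a.s. Substituting these into the linearized decomposition, using $a_{Y,n}\leq a_{Z,n}$, and collecting both terms over the common denominator $\mathcal{S}_\alpha(\textbf{p}_Y)$ reproduces precisely the constant $A_{T,\alpha}(\textbf{p}_{(X/Y)})$ displayed just before the proposition, which is the first assertion.

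\noindent For the central limit theorem, the key observation is that $\mathcal{S}_\alpha(\widehat{\textbf{p}}_Y^{(n)})$ and $\mathcal{S}_\alpha(\widehat{\textbf{p}}_{(X,Y)}^{(n)})$ are both smooth functions of the \emph{same} empirical multinomial vector $(\widehat{p}_{Z,k}^{(n)})_{k\in K}$, the marginal coordinates being the linear aggregates $\widehat{p}_{Y,j}^{(n)}=\sum_{k\in B_j}\widehat{p}_{Z,k}^{(n)}$. Hence the multivariate delta method applied to the Gaussian limit $G(\textbf{p}_Z)\stackrel{d}{\sim}\mathcal{N}(0,\Sigma_{\textbf{p}_Z})$, with $\Sigma_{\textbf{p}_Z}$ given by \eqref{vars}, yields joint convergence of $\sqrt{n}\bigl(\mathcal{S}_\alpha(\textbf{p}_Y)-\mathcal{S}_\alpha(\widehat{\textbf{p}}_Y^{(n)}),\ \mathcal{S}_\alpha(\widehat{\textbf{p}}_{(X,Y)}^{(n)})-\mathcal{S}_\alpha(\textbf{p}_{(X,Y)})\bigr)$ to a centered bivariate Gaussian $(G_{\mathcal{S}_\alpha}(\textbf{p}_Y),G_{\mathcal{S}_\alpha}(\textbf{p}_{(X,Y)}))$. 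Feeding this into the linearized decomposition with weights $\mathcal{S}_\alpha(\textbf{p}_{(X,Y)})/((1-\alpha)(\mathcal{S}_\alpha(\textbf{p}_Y))^2)$ and $1/((1-\alpha)\mathcal{S}_\alpha(\textbf{p}_Y))$, and removing the a.s.-negligible correction factors by Slutsky's lemma, produces a centered Gaussian limit whose variance is $\sigma_{T,\alpha}^2(\textbf{p}_Y)+\sigma_{T,\alpha}^2(\textbf{p}_{(X,Y)})+2\,\text{Cov}\bigl(G_{T,\alpha}(\textbf{p}_Y),G_{T,\alpha}(\textbf{p}_{(X,Y)})\bigr)=\sigma_{T,\alpha}^2(\textbf{p}_{(X/Y)})$, giving the second assertion. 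The one genuinely delicate step --- just as in Proposition~\ref{pro_cond_tsal_yx} --- is that covariance term: the marginal and joint empirical power sums are dependent (they come from one sample), so the cross term cannot be dropped, and pinning it down amounts to evaluating $\nabla\mathcal{S}_\alpha(\textbf{p}_Y)^{t}\,\Sigma_{\textbf{p}_Z}\,\nabla\mathcal{S}_\alpha(\textbf{p}_{(X,Y)})$ through the block structure $\{B_j\}$ --- the exact analogue of the $\{A_i\}$ bookkeeping in the $(Y|X)$ case, where essentially all the calculation resides.
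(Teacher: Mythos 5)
Your proof is exactly the $X\leftrightarrow Y$ mirror of the paper's proof of Proposition~\ref{pro_cond_tsal_yx}, which is precisely what the paper intends when it omits this proof as ``similar'': the linearized decomposition of the ratio of power sums, the appeal to Corollary~1 of \cite{baentrop} for the marginal power sum and to Proposition~2 of \cite{ba-mutual-entrop} for the joint one, and the delta-method/Slutsky assembly of the limiting variance $\sigma_{T,\alpha}^2(\textbf{p}_Y)+\sigma_{T,\alpha}^2(\textbf{p}_{(X,Y)})+2\,\mathrm{Cov}(\cdot,\cdot)$ all coincide with the intended argument. The one step to watch is your inequality $a_{Y,n}\leq a_{Z,n}$, which in general holds only up to the factor $r$ (since $\widehat{p}_{Y,j}^{(n)}-p_{Y,j}$ is a sum of $r$ cellwise deviations), but the paper's own proof of the $(Y|X)$ case makes the identical silent identification between $a_{X,n}$ and $a_{Z,n}$, so you are faithfully reproducing its reasoning.
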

\section{Simulation study} \label{sect_simul}

\noindent In this section, we present a example to confirm the consistency and the asymptotic normality of the proposed measures of information estimators developed in the previous sections. \\

 \noindent For simplicity consider two discretes random variables $X$ and $Y$ having each one two outcomes $x_1,x_2,x_3$ and $y_1,y_2$ and such that 

\begin{eqnarray*}
&&\mathbb{P}(X=x_1,Y=y_1)=	\frac{3600}{5369},\ \ \ \mathbb{P}(X=x_1,Y=y_2)=\frac{900}{5369}\\
&&\mathbb{P}(X=x_2,Y=y_1)=\frac{400}{5369},\ \  \mathbb{P}(X=x_2,Y=y_2)=\frac{225}{5369}\\
&&\mathbb{P}(X=x_3,Y=y_1)=\frac{144}{5369},\ \  \mathbb{P}(X=x_3,Y=y_2)=\frac{100}{5369}.
\end{eqnarray*}

\noindent So that the associated random variable $Z$, defined by \eqref{pijzk} and \eqref{zkpij}, is a discrete random variable whose probability distribution is that of a \textit{discrete Zipf distributions} $Z_{\beta,m}$ with parameter $\beta=2$ and $m=6
$. 
Its \textit{p.m.f.}  is defined by 
$$p_{Z,k}=\frac{k^{-\beta}}{\displaystyle \sum_{i=1}^{m}i^{-\beta}}\ \ \text{for}\ \ k =1,2,\cdots,m$$
where $\sum_{j=1}^mj^{-\beta}$ refers to the generalized harmonic function.\\
\noindent From \eqref{cond-shan-ent}, \eqref{cond-rey-def} and \eqref{cond-tsalp-def}, we obtain \begin{eqnarray*}
 H(\textbf{p}_{Y|X})
 &=&0.52623\ \ \text{nat},\ \ \ R_2(\textbf{p}_{Y|X})= 0.39027\ \ \text{nat},\ \ \text{and}\ \ T_2(\textbf{p}_{Y|X})=0.32312\ \ \text{nat}.
\end{eqnarray*}
\noindent As well as 
\begin{eqnarray*}
H(\textbf{p}_{X/Y})
= 0.64150\ \ \text{nat}, \ \ 
R_2(\textbf{p}_{X/Y})= 0.28723\ \ \text{nat},\ \ \text{and}\ \ T_2(\textbf{p}_{X/Y})=0.24966 \ \ \text{nat}.
\end{eqnarray*}

\begin{table}
	\centering
		\begin{tabular}{c|c|c|c|c|c|c}
		\hline\hline
		\textcolor{white}{x} & & & & & &\\
		$(X,Y)$ & $(x_1,y_1)$& $(x_1,y_2)$ &$(x_2,y_1)$ & $(x_2,y_2)$& $(x_3,y_1)$& $(x_3,y_2)$\\
		$Z$ & $z_1$&$z_2$ &$z_3$ &$z_4$ &$z_5$ &$z_6$\\
		\textcolor{white}{x} & & & & & &\\
		$p_{Z,k}$& $\frac{3600}{5369}$& $\frac{3600}{900}$& $\frac{3600}{400}$& $\frac{225}{5369}$& $\frac{144}{5369}$& $\frac{100}{5369}$\\
		\textcolor{white}{x} & & & & & &\\
		\hline\hline
		\end{tabular}
		\vspace{0.4cm}
	\caption{Joint \textit{mpf} of $Z
	$}
	\label{tab2}
\end{table}

\noindent In our applications we simulated i.i.d. samples of size $n$ according to $\textbf{p}_Z$ and compute the CSE estimate $H(\widehat{\textbf{p}}_{(Y|X)}^{(n)})$, CRE estimate $R_\alpha(\widehat{\textbf{p}}_{(Y|X)}^{(n)})$, and CTE estimator $T_\alpha(\widehat{\textbf{p}}_{(Y|X)}^{(n)})$.

\noindent  \textsc{Figure} \ref{cseyx} concerns the CSE estimator $H(\widehat{\textbf{p}}_{(Y|X)}^{(n)})$ whereas \textsc{Figure} \ref{creyx} and \ref{cteyx} concern that of the CRE estimator $R_2(\widehat{\textbf{p}}_{(Y|X)}^{(n)})$ and $T_2(\widehat{\textbf{p}}_{(Y|X)}^{(n)})$.\\

\noindent In each of these \textsc{F}igures, left panels 
  represent plot of the proposed entropy estimator, built from sample sizes of $n=100,200,\cdots,30000$, and the true conditional entropy of $(Y|X)$  (represented by horizontal black line). We observe 
 that when  
 the sample sizes $n$ increase, then the proposed estimator value converges almost surely to the true value. \\
 \noindent  Middle panels show the histogram of the  data and where the red line represents the plots of the theoretical normal distribution calculated
 from the same mean and the same standard deviation of the data.\\
 \noindent Right panels concern the Q-Q plot of the data which display the observed values against normally 
distributed data (represented by the red line). We observe that the  underlying 
distribution of the data is normal since the points fall along a straight line.

 \section{Conclusion}
\label{sect_conclus} 
 In this paper, we described a method of estimating the joint probability mass function of a pair of discrete random variables. By the plug-in method, we constructed estimates of conditional Shannon-R\'eyni-Tsallis entropies and established almost sure rates of convergence and asymptotic normality of these ones. A simulation studies  confirm our results.
 \newpage

 \begin{center}
\begin{figure}[H]
\includegraphics[scale=0.28]{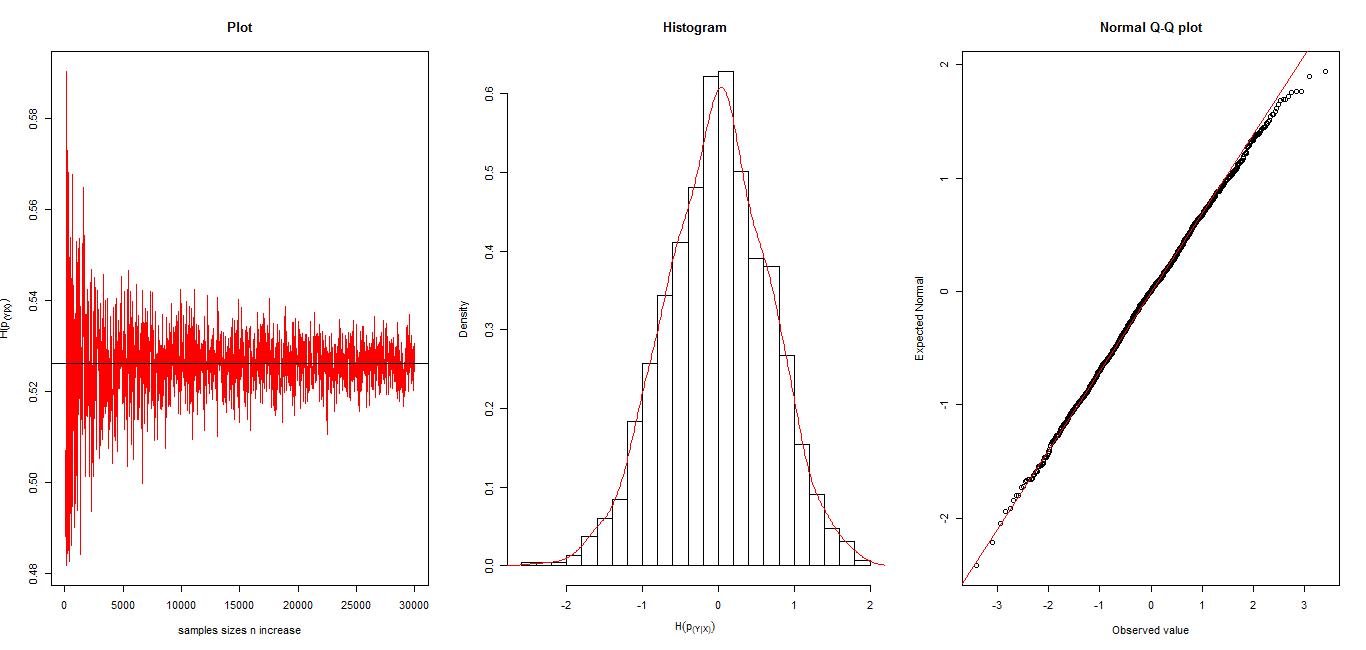} 
 \caption{Plot of $  H(\widehat{\textbf{p}}_{(Y|X)}^{(n)})$ when samples sizes increase, histogram and normal Q-Q plot  versus $\mathcal{N}(0,1)$.}\label{cseyx}
\end{figure}
\begin{figure}[H]
\includegraphics[scale=0.28]{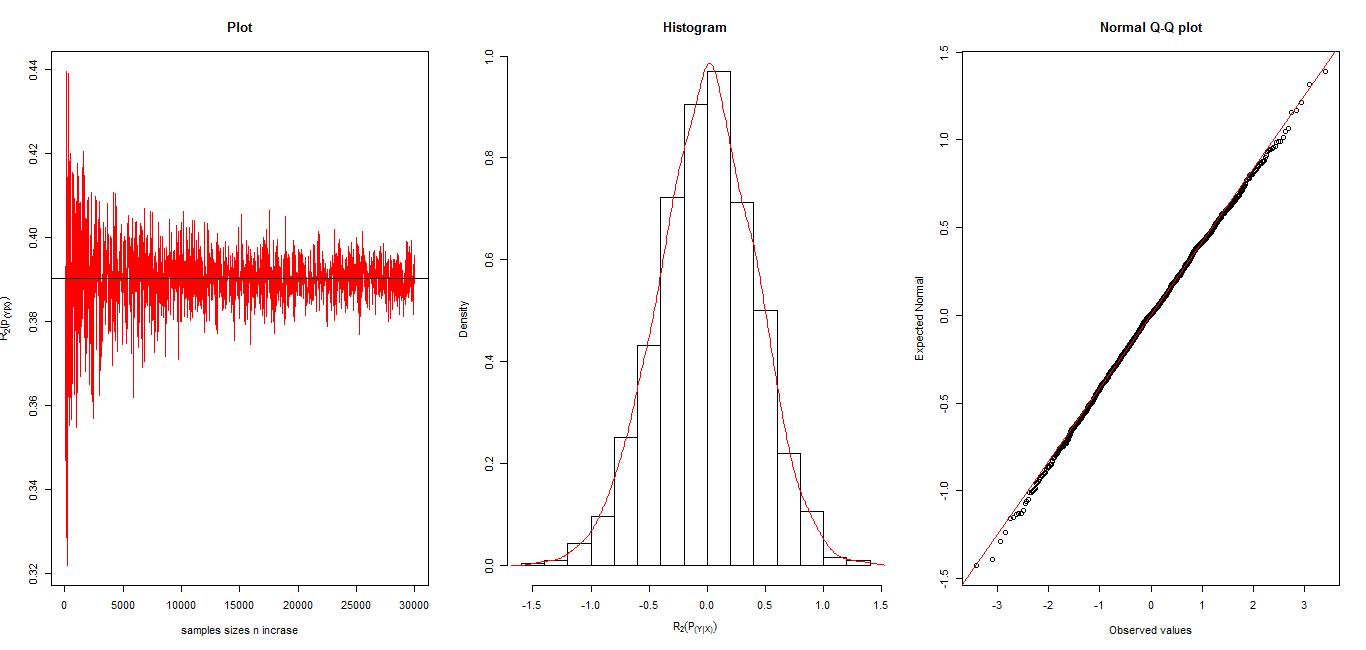} 
 \caption{Plot of $  R_2(\widehat{\textbf{p}}_{(Y|X)}^{(n)})$ when samples sizes increase, histogram and normal Q-Q plot  versus $\mathcal{N}(0,1)$.}\label{creyx}
\end{figure}
\begin{figure}[H]
\includegraphics[scale=0.28]{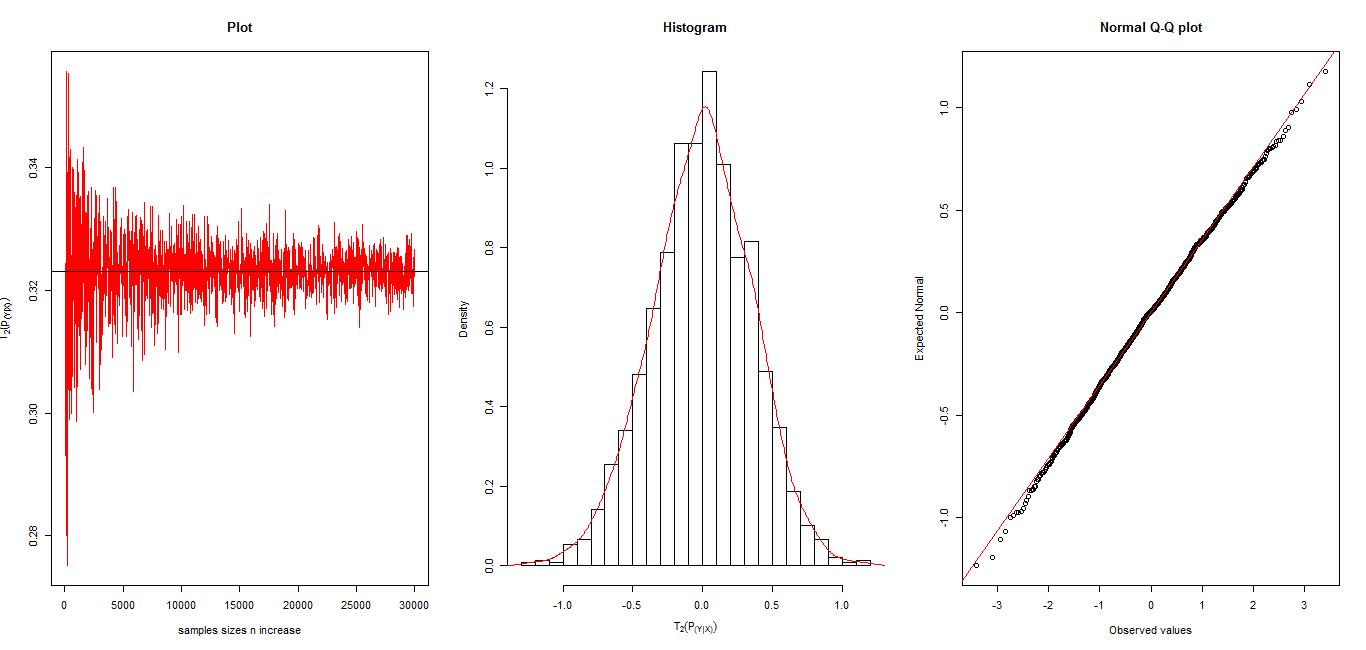} 
 \caption{Plot of $  T_2(\widehat{\textbf{p}}_{(Y|X)}^{(n)})$ when samples sizes increase, histogram and normal Q-Q plot  versus $\mathcal{N}(0,1)$.}\label{cteyx}
\end{figure}
\end{center}


\end{document}